\numberwithin{equation}{section}
\def\cX{{\mathcal X}}
\newtheorem{thm}{Theorem}[section]
\newtheorem{lem}{Lemma}[section]
\newtheorem{prop}{Proposition}[section]
\newtheorem{rem}{Remark}[section]
\theoremstyle{definition}
\newtheorem{defn}{Definition}[section]
\def\ddj{\dot \Delta_j}
\def\hat{\widehat}
\newcommand\C{\mathbb{C}}
\newcommand\R{\mathbb{R}}
\newcommand\Z{\mathbb{Z}}
\newcommand{\N}{\mathbb{N}}
\newcommand{\myref}[1]{}
\def\cC{{\mathcal C}}
\def\cD{{\mathcal D}}
\def\cE{{\mathcal E}}
\def\cO{{\mathcal O}}
\def\cX{{\mathcal X}}
\def\dB{{\dot {B}}}
\def\tL{\widetilde{L}}
\newcommand{\Sum}{\displaystyle \sum}
\begin{document}

\title[the Euler-Fourier system with damping system]{
Optimal time-decay for Euler-Fourier system with damping in the critical $L^{2}$ framework}

\author{Jing Liu, Lianchao Gu}

\subjclass[2010]{35Q31, 76N15}

\keywords{Euler-Fourier system with damping; critical regularity; optimal time-decay rate}

\begin{abstract}
This paper is concerned with the large time behavior of solutions to the Euler-Fourier system with damping in $\R^{d}~(d\geq1)$. A time-weighted energy argument has been developed within the $L^{2}$ framework to derive the optimal time-decay rates, which enables us to remove the smallness of low-frequencies of initial data. A great part of our analysis relies on the study of a Lyapunov functional in the spirit of \cite{LiShou2023}, which mainly depends on some elaborate use of non-classical Besov product estimates and interpolations. Exhibiting a damped mode with faster time decay than the whole solution also plays a key role.

%In this paper, we focus on the optimal time decay of global solutions in the critical Chemin-Lerner type space. The evolution of the energy in negative Besov space is derived to be preserved. We use the Lyapunov-type inequality of the energy functions to show the time-decay rates. Compared with the previous work, the decay rates of the global solution are established for lower-regularity initial data. \textcolor{red}{Moreover, under the additional condition that the low-frequency part of the initial perturbation also belongs to another Besov space with lower regularity, we obtain the optimal time-decay rates of the global solution toward the equilibrium state.}
\end{abstract}

\maketitle

\section{Introduction}
In this paper, we consider the following compressible Euler system with damping and heat conduction in $\R^+\times\R^{d}~(d\geq1)$:
\begin{equation}\label{1.1}
\left\{
\begin{array}{l}\rho_t+ \text{div}(\rho u)=0,\\ [1mm]
 (\rho u)_t +\text{div}(\rho u \otimes u) + \nabla P =-\alpha\rho u,\\[1mm]
 (\rho\cE)_t +  \text{div}(\rho u \cE + uP)=-\alpha\rho u^2 + k\Delta  T,
 \end{array} \right.
\end{equation}
with the initial data
\begin{align}\label{1.2}
(\rho, u, T)(x,0)=(\rho_0, u_0, T_0)(x)\rightarrow(\bar{\rho}, 0, \bar{T}),\quad\quad|x|\rightarrow\infty,
\end{align}
where $\bar{\rho}>0$ and $\bar{T}>0$ are some given constants. Here $\rho=\rho(x,t),$ $u=u(x,t)\in\R^{d},$  $T=T(x,t)$ and $P=P(x,t)$ denote the mass density, velocity, absolute temperature and pressure function of the fluid respectively. The total energy $\cE=e+\frac{|u|^2}{2},$ where $e$ is the internal energy. The constant $\alpha>0$ models friction. $k>0$ is the heat conductivity coefficient. 

In this paper, we will consider ideal, polytropic fluids, so that the equations of state for the fluids are given by 
$$P=R\rho T, \ \ \  e=c_v T,$$
where $R>0$ and $c_v>0$ are the universal gas constant and the specific heat at constant volume, respectively. Thus \eqref{1.1} can reduce to the following system:
\begin{equation}\label{1.1**}
\left\{
\begin{array}{l}\rho_t+ \text{div}(\rho u)=0,\\[1mm]
 \rho u_{t} +\rho u\cdot \nabla u + \nabla P =-\alpha\rho u,\\[1mm]
 \rho T_{t}  +  \rho u\cdot \nabla T+\frac{1}{c_v}P\text{div}u=\frac{k}{c_v}\Delta  T,\\[1mm]
 (\rho, u, T)(x,0)=(\rho_0, u_0, T_0)(x).
 \end{array} \right.
\end{equation}
\iffalse
For the isentropic flow, namely $T=\text{const.},$ \eqref{1.1**} takes the form 
\begin{equation}\label{1.3}
\left\{
\begin{array}{l}\rho_t+ \text{div}(\rho u)=0,\\[1mm]
 (\rho u)_t +\text{div}(\rho u \otimes u) + \nabla P =-\alpha\rho u,
 \end{array} \right.
\end{equation}
System \eqref{1.3} represents the compressible Euler system with damping. This system models the flow of compressible gas through a porous medium, where the medium induces a frictional force proportional to the linear momentum in the opposite direction. 
\fi

In this paper, we focus on the Cauchy problem \eqref{1.1**} and investigate the asymptotic behavior of global solutions. Let us review some works related to the subject of this paper. In the one-dimensional space case, Nishida \cite{Nishida1,Nishida2} firstly obtained the global existence of a smooth solution with small data and \cite{Hsiao2,Hsiao3,Nishida1,Nishida2} for the large-time behavior of the solution, and the references therein. In multi-dimension space case, the global existence of the small smooth solutions was
proved by different methods in \cite{Sideris,WangYang1} and the large-time behavior of the solution was studied in \cite{Sideris,Tan-2012,WangYang1} and the references therein. For the non-isentropic flow, the global existence of
small smooth solutions to the Cauchy problem was proved in \cite{HsiaoLuo1} in one dimension, and the large time behavior of these solutions can be seen in \cite{HsiaoSerre1,MarcatiPan1}. For multi-dimensions, Tan, Wu and
Huang \cite{WuTanHuang1} proved the global existence of the small smooth solutions and showed that under the
additional assumption that the initial data are bounded in the $L^1$ space. Subsequently, Wu and Miao \cite{WuMiao1} obtained the global existence and large time behavior of the solution when the initial data is close to its equilibrium in $H^3$-norm. In contrast to \cite{WuTanHuang1}, this work removed additional $L^1$ assumptions for the initial value.

%the pressure of the system converges to its equilibrium state at the rate $(1+t)^{-\frac{3}{4}}$ in the $L^2$-norm and the velocity of the system decays at the rate $(1+t)^{-\frac{5}{4}}$ in the $L^2$-norm, but the derivatives of the solutions decay at the same rate as the velocity’s.

In this paper, our aim of this paper is to prove (more accurate) decay estimates in the critical regularity Besov spaces, which is close to the framework of weak solutions. Xu and Kawashima \cite{Xu1, Xu2} investigated partially dissipative hyperbolic system for balance laws, including Euler system with damping, and established the global existence and optimal decay estimates of classical solutions in critical Besov spaces under the Shizuta-Kawashima condition (see \cite{ShizutaKawashima1}).
Recently, Crin - Barat and Danchin \cite{BaratDanchin1, BaratDanchin2, BaratDanchin3} made improvements to the results presented in \cite{Xu1, Xu2}. They proved the global existence and uniqueness of solutions on the $L^2-L^p$ type hybrid homogeneous Besov spaces with different regularity exponents in low and high frequencies and derive the optimal time-decay rates of global solutions. In their study, a specific damped mode emerged, which is a key finding of their research and exhibits a faster time decay compared to the whole solution.

The rest of this paper is organized as follows. In Section \ref{S2}, we state the main results and explain the strategies of this paper. In Section \ref{Low-High}, we establish the low-frequency and high-frequency analysis for the Cauchy problem \eqref{2.2}. In section \ref{S2.2}, we carry out the proofs of Theorems \ref{decay-2} on the optimal time-decay rates of the global solution. In the appendix, we present some notations of Besov spaces and recall related analysis tools used in this paper.\\

\section{Reformulation and main results}\label{S2}
Without loss of generality, set
$$R = c_v=\alpha = k =1,$$
Define the perturbation
$$a:=\rho-1, \ \ a_0:=\rho_0-1,\ \ \theta:=T-1,\ \ \theta_0:=T_0-1.$$
Then, the Cauchy problem \eqref{1.1}-\eqref{1.2} can be reformulated into
\begin{equation}\label{2.2}
\left\{
\begin{array}{l}\partial_{t}a+ u\cdot\nabla a+ (1+a)\text{div}u=0,\\ [1mm]
 \partial_{t}u + \nabla a + u + \nabla \theta + u\cdot \nabla u + \frac{\theta-a}{a+1}\nabla a=0,\\[1mm]
 \partial_t\theta +(1+\theta)\text{div} u+u\cdot\nabla \theta -\frac{1}{a+1}\Delta \theta=0,\\[1mm]
 (a, u,\theta)(x,0)=( a_{0}, u_{0},\theta_{0})(x)\rightarrow(0,0,0),\quad|x|\rightarrow\infty.\\[1mm]
 \end{array} \right.
\end{equation}

%Now, our main results are stated as follows. First, we have the local existence of the strong solution to the Cauchy problem \eqref{2.2} for general initial data in the critical Besov space as follows: Then, we establish the global existence of the solution to the Cauchy problem \eqref{2.2} for the initial data close to the equilibrium state below:

In this paper, we focus on the Cauchy problem \eqref{2.2} and investigate the asymptotic behavior of global solution. First of all, let us take a look at the spectral analysis briefly, which was made by Chen, Tan and Wu \cite{ChenTanWu2015}. It is not difficult to check that the linearized system (around the equilibrium) reads
\begin{equation*}
\left\{
\begin{array}{l}\partial_{t}a+ \text{div}u=0,\\[1mm]
 \partial_{t}u + \nabla a + u + \nabla \theta =0,\\[1mm]
 \partial_t\theta +\text{div} u -\Delta \theta=0\\[1mm]
 \end{array} \right.
\end{equation*}
with $a=\rho-1$, $\theta=T-1$. It follows from \cite{ChenTanWu2015} that
\begin{equation*}
\hat{a}(t,\xi)\sim\left\{
\begin{array}{l}C(e^{-c_+|\xi|^{2}t}|\hat{a}_0|+|\xi|e^{-\frac{1}{2}c_+|\xi|^{2}t}|\hat{u}_0|+e^{-\frac{1}{2}c_+|\xi|^{2}t}|\hat{\theta}_0|),\quad|\xi|\rightarrow0;\\[1mm]
C(e^{-\tilde{c}t}|\hat{a}_0|+e^{-\tilde{c}t}|\hat{u}_0|+|\xi|^{-1}e^{-\tilde{c}t}|\hat{\theta}_0|),\quad|\xi|\rightarrow\infty,\\[1mm]
 \end{array} \right.
\end{equation*}
\begin{equation*}
\hat{u}(t,\xi)\sim\left\{
\begin{array}{l}C(|\xi|e^{-c_+|\xi|^{2}t}|\hat{a}_0|+e^{-\frac{1}{2}t}|\hat{u}_0|+|\xi|e^{-\frac{1}{2}c_+|\xi|^{2}t}|\hat{\theta}_0|),\quad|\xi|\rightarrow0;\\[1mm]
C(e^{-\tilde{c}t}|\hat{a}_0|+e^{-\tilde{c}t}|\hat{u}_0|+|\xi|^{-1}e^{-\tilde{c}t}|\hat{\theta}_0|),\quad|\xi|\rightarrow\infty,\\[1mm]
 \end{array} \right.
\end{equation*}
and
\begin{equation*}
\hat{\theta}(t,\xi)\sim\left\{
\begin{array}{l}C(e^{-\frac{1}{2}c_+|\xi|^{2}t}|\hat{a}_0|+|\xi|e^{-\frac{1}{2}c_+|\xi|^{2}t}|\hat{u}_0|+e^{-\frac{1}{2}c_+|\xi|^{2}t}|\hat{\theta}_0|),\quad|\xi|\rightarrow0;\\[1mm]
C(|\xi|^{-1}e^{-\tilde{c}t}|\hat{a}_0|+|\xi|^{-1}e^{-\tilde{c}t}|\hat{u}_0|+|\xi|^{-2}e^{-\tilde{c}t}|\hat{\theta}_0|),\quad|\xi|\rightarrow\infty,\\[1mm]
 \end{array} \right.
\end{equation*}
where $c_+, \tilde{c}$ are generic constants. One can observe that $a$ is associated with a parabolic behavior at low frequencies and an exponentially damped behavior at high frequencies; while $u$ is associated with exponentially damped behavior at both low and high frequencies; $\theta$ is associated with a parabolic behavior at both low and high frequencies. If the initial data $\|(a_0,\theta_0)\|_{L^1}+\|u_0\|_{L^{\frac{3}{2}}}<+\infty$, then one can conclude the optimal decay rates:
$$\|(a,\theta)\|_{L^2}\lesssim(1+t)^{-\frac{3}{4}},\quad\|u\|_{L^2}\lesssim(1+t)^{-\frac{5}{4}}.$$

However, there are few results to our knowledge on the global existence and large time behavior of solutions to \eqref{2.2} in spatially critical spaces.
Then, we establish the global existence of the solution to the Cauchy problem \eqref{2.2} for the initial data close to the equilibrium state below. Recently, Gu and Liu \textcolor{red}{***} established the global existence of small strong solution to $\eqref{2.2}$ in the $L^2$ critical hybrid Besov space. For convenience, we state it
as follows (the reader is also referred to \textcolor{red}{**-Gu}).
\begin{thm}\label{GlobalExist}
For any $d\geq1$, there exists a constant $\varepsilon_{0}>0$ such that if the initial data $(a_0,u_0,\theta_0)$ satisify $(a_0,u_0,\theta_0)\in \dB^{\frac{d}{2}}_{2,1}\cap\dB^{\frac{d}{2}+1}_{2,1}$, and
\begin{align}\label{IniCon1}
\cX_0\triangleq \|(a_0,u_0,\theta_0)\|^{\ell}_{\dB^{\frac{d}{2}}_{2,1}}+\|(a_0,u_0,\theta_0)\|^{h}_{\dB^{\frac{d}{2}+1}_{2,1}} \leq \varepsilon_0,
\end{align}
then the Cauchy problem \eqref{2.2} admits a unique global strong solution $(a,u,\theta)$, which satisfies
\begin{equation}\label{1dlg-E4-3.6}
 \left\{
\begin{aligned}
&a^{\ell}\in \cC_{b}(\R^+;\dB^{\frac{d}{2}}_{2,1})\cap \tL^2(\R^+;\dB^{\frac{d}{2}+1}_{2,1}),~a^{h}\in \cC_{b}(\R^+;\dB^{\frac{d}{2}+1}_{2,1})\cap \tL^2(\R^+;\dB^{\frac{d}{2}+1}_{2,1}),\\
&u^{\ell}\in \cC_{b}(\R^+;\dB^{\frac{d}{2}}_{2,1})\cap \tL^2(\R^+;\dB^{\frac{d}{2}}_{2,1}),~u^{h}\in \cC_{b}(\R^+;\dB^{\frac{d}{2}+1}_{2,1})\cap \tL^2(\R^+;\dB^{\frac{d}{2}+1}_{2,1}),\\
&\theta^{\ell}\in \cC_{b}(\R^+;\dB^{\frac{d}{2}}_{2,1})\cap \tL^2(\R^+;\dB^{\frac{d}{2}+1}_{2,1}),~\theta^{h}\in \cC_{b}(\R^+;\dB^{\frac{d}{2}+1}_{2,1})\cap \tL^2(\R^+;\dB^{\frac{d}{2}+2}_{2,1}),
\end{aligned}
\right.
\end{equation}
and
\begin{equation}\label{IniCon}
\begin{split}
\cX&\triangleq\|(a,u,\theta)\|^{\ell}_{\tL^\infty_t(\dB^{\frac{d}{2}}_{2,1})} +\|(a,u,\theta)\|^{h}_{\tL^\infty_t(\dB^{\frac{d}{2}+1}_{2,1})}+(a,\theta)\|^{\ell}_{\tL^2_t(\dB^{\frac{d}{2}+1}_{2,1})}\\
&\quad+\|u\|^{\ell}_{\tL^2_t(\dB^{\frac{d}{2}}_{2,1})}+\|(a,u)\|^{h}_{\tL^2_t(\dB^{\frac{d}{2}+1}_{2,1})} + \|\theta\|^{h}_{\tL^2_t(\dB^{\frac{d}{2}+2}_{2,1})}\\
&\leq C\cX_{0},~~~~t>0,
\end{split}
\end{equation}
for $C>0$ a constant independent of time.
\end{thm}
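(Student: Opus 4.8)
The statement is a global-in-time existence result in critical regularity for a partially dissipative system, so the plan is the classical scheme — local existence, uniform a priori estimates, continuity argument — with the substantive work concentrated in producing dissipation for the density $a$, which has no self-smoothing in either frequency regime.

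\emph{Step 1: local existence.} I would first solve \eqref{2.2} locally in time in $\dB^{\frac d2}_{2,1}\cap\dB^{\frac d2+1}_{2,1}$ by a standard iteration: given $a^n$, solve the linear system for $(a^{n+1},u^{n+1},\theta^{n+1})$ obtained by freezing the coefficients $1+a^n$ and $\frac1{1+a^n}$ (a symmetric-hyperbolic part coupled to a parabolic equation), estimate the iterates uniformly on a short interval by means of the product, composition and commutator estimates recalled in the appendix, and pass to the limit. This produces a unique solution on a maximal interval $[0,T^*)$ whose length is bounded below in terms of $\cX_0$.

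\emph{Step 2: a priori estimates via a Lyapunov functional.} The heart of the proof is to propagate $\cX$ in \eqref{IniCon} uniformly in time. Localizing \eqref{2.2} with $\ddj$ and running $L^2$ energy estimates yields directly only the damping of $u$ and the parabolic gain of $\theta$ (from $-\frac1{1+a}\Delta\theta$, after splitting off the constant-coefficient part $-\Delta\theta$ and absorbing $\frac a{1+a}\Delta\theta$ as a perturbation); the density $a$ is left undamped. To recover dissipation for $a$ I would use the Shizuta--Kawashima structure through the cross term $\int\ddj\nabla a\cdot\ddj u\,dx$: from $\d_t a=-\mathrm{div}\,u+(\text{quadratic})$ and the $u$-equation one obtains $\frac{d}{dt}\int\ddj\nabla a\cdot\ddj u\,dx+c\,\|\ddj\nabla a\|_{L^2}^2\lesssim\|\ddj\,\mathrm{div}\,u\|_{L^2}^2+\|\ddj u\|_{L^2}^2+\|\ddj\nabla a\|_{L^2}\|\ddj\nabla\theta\|_{L^2}+(\text{nonlinear})$. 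Then $\mathcal L_j:=\tfrac12\|(\ddj a,\ddj u,\ddj\theta)\|_{L^2}^2+\eta\int\ddj\nabla a\cdot\ddj u\,dx$, with $\eta$ small, is equivalent to $\|(\ddj a,\ddj u,\ddj\theta)\|_{L^2}^2$ and satisfies a differential inequality whose dissipation term controls $\|\ddj\nabla a\|_{L^2}^2+\|\ddj u\|_{L^2}^2+\|\ddj\nabla\theta\|_{L^2}^2$. Multiplying by the dyadic weights $2^{j\frac d2}$ in low frequency and $2^{j(\frac d2+1)}$ in high frequency and summing in $j$ reproduces precisely the norms in $\cX$; at high frequency $\|\ddj\nabla a\|_{L^2}\sim2^j\|\ddj a\|_{L^2}$, so $a^h$ and $u^h$ acquire exponential-type control at the index $\frac d2+1$ (no regularity gain, consistent with the hyperbolic coupling), whereas $\theta^h$ gains one derivative in $\tL^2_t$. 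Equivalently the same dissipation can be read off a damped mode — a suitable combination of $a$ and $\mathrm{div}\,u$ obeying a damped equation — which is the object exploited later for the sharp decay rates.

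\emph{Step 3: nonlinear terms and conclusion.} The source terms $u\cdot\nabla a$, $a\,\mathrm{div}\,u$, $(1+\theta)\mathrm{div}\,u$, $u\cdot\nabla\theta$, $\frac{\theta-a}{1+a}\nabla a$, $\frac a{1+a}\Delta\theta$ are at least quadratic and are estimated by Bony's decomposition, the product law $\dB^{\frac d2}_{2,1}\cdot\dB^{\frac d2}_{2,1}\hookrightarrow\dB^{\frac d2}_{2,1}$ together with its hybrid low/high refinements, composition estimates for $a\mapsto\frac1{1+a}$, and commutator estimates for $[\ddj,u\cdot\nabla]$. I expect the principal obstacle to be the high-frequency derivative accounting: a term such as $u\cdot\nabla a$ naively costs one derivative too many when $a$ and $u$ are sought at regularity $\frac d2+1$, and this loss is compensated exactly by the extra factor $2^j$ furnished by the $\|\ddj\nabla a\|_{L^2}^2$ dissipation coming from the cross term — which is why the sharp (``non-classical'') form of the product estimates, rather than the crude ones, is indispensable. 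Once every nonlinear contribution is bounded by $\cX$ times a dissipation norm (so that it can be absorbed), one gets $\cX(t)\le C\cX_0+C\cX(t)^2$ on $[0,T^*)$; smallness of $\cX_0$ and a standard continuity/bootstrap argument then force $\cX(t)\le C\cX_0$ on $[0,T^*)$, hence $T^*=+\infty$, and the membership \eqref{1dlg-E4-3.6} follows from the associated linear estimates.
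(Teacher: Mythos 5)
The paper does not actually prove Theorem \ref{GlobalExist}: it is quoted from the companion preprint of Gu and Liu cited in the bibliography, so there is no in-paper proof to compare against line by line. That said, your outline matches the machinery this paper does set up in Section \ref{Low-High} — the Lyapunov functionals $\cE_{1,j}$, $\cE_{2,j}$ with the cross term $\eta\int_{\R^d}\ddj\nabla a\,\ddj u\,dx$ (weighted by $\eta_2 2^{-2j}$ in high frequencies) to extract dissipation for $a$, commutator remainders $\tilde R^i_j$ for the quasilinear transport terms, and the product/composition estimates of the appendix — so your route is essentially the intended one. One small correction: at high frequencies the derivative loss in $u\cdot\nabla a$ is not absorbed by the extra $2^j$ in the $\|\ddj\nabla a\|_{L^2}^2$ dissipation (that factor only places the dissipation at level $\tfrac d2+1$, with no further gain); it is handled, as in \eqref{4.14}--\eqref{reminder}, by symmetrizing the convection term (integration by parts against the weight $\tfrac{1+\theta}{(1+a)^2}$) together with the commutator estimate of Proposition \ref{CommutatorEstimate} — a device you do list, so the gap is in the explanation rather than in the toolbox.
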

For Besov spaces, the readers can refer to Definitions \ref{D5.1}-\ref{D5.2} in the appendix.
Next, we study the optimal time-decay rates of global solutions  to system \eqref{2.2}.
\begin{thm}\label{decay-2}
For any $d\geq1$, let $(a,u,\theta)$ be the corresponding global solution of \eqref{2.2}. If the low-frequency part of the initial data $(a_0,u_0,\theta_0)$ additionally fulfills
\begin{equation}\label{lowinitial}
(a_0,u_0,\theta_0)^{\ell}\in\dB^{-\sigma_{1}}_{2,\infty}\quad \mbox{for}\quad \sigma_{1}\in(-\frac{d}{2},\frac{d}{2}],
\end{equation}
then it holds for any $t\in\R^+$ that
\begin{equation}\label{decay}
\|(a,u,\theta)(t)\|_{\dB^{\sigma}_{2,1}}\leq C\delta_{0}(1+t)^{-\frac{1}{2}(\sigma+\sigma_{1})},\quad\sigma\in(-\sigma_{1},\frac{d}{2}],
\end{equation}
with a constant $C>0$ independent of time, and
\begin{equation}\label{decay-1}
\delta_{0}\triangleq\|(a_0,u_0,\theta_0)\|^{\ell}_{\dB^{-\sigma_{1}}_{2,\infty}}+\|(a_0,u_0,\theta_0)\|^{h}_{\dB^{\frac{d}{2}+1}_{2,1}}.
\end{equation}
Furthermore, if $d\geq2$ and $\sigma_1\in(-\frac{d}{2}+1,\frac{d}{2}]$, then the following time-decay estimate holds:
\begin{equation}\label{decay-3}
 \left\{
\begin{aligned}
&\|u\|_{\dB^{-\sigma_{1}}_{2,\infty}}\leq C\delta_{0}(1+t)^{-\frac{1}{2}},\\
&\|u\|_{\dB^{\sigma}_{2,1}}\leq C\delta_{0}(1+t)^{-\frac{1}{2}(1+\sigma+\sigma_{1})},\quad\sigma\in(-\sigma_{1},\frac{d}{2}-1].
\end{aligned}
\right.
\end{equation}
\end{thm}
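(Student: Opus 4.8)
The plan is to build on Theorem~\ref{GlobalExist}, which already provides a global solution with $\cX\le C\cX_{0}\ll 1$ (so that every high-regularity norm in \eqref{IniCon} is small and bounded on $\R^{+}$), and to propagate the extra low-frequency information \eqref{lowinitial} by a time-weighted energy argument organised in three stages: first propagate a negative-index Besov bound at low frequencies, then turn it into \eqref{decay} through a Lyapunov functional in the spirit of \cite{LiShou2023}, and finally use the damping to improve the estimate of $u$ to \eqref{decay-3}.

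\emph{Stage 1: propagating the negative norm.} I would localise \eqref{2.2} with $\ddj$, $2^{j}\lesssim 1$, and run an $L^{2}$ energy estimate. The linearised part yields the dissipation predicted by the spectral picture recalled above — of order $2^{2j}$ for $a$ (through the $\mathrm{div}\,u$--$\nabla a$ coupling, i.e. the Shizuta--Kawashima mechanism) and for $\theta$, of order $1$ (damping) for $u$ — while the source terms $u\cdot\nabla a$, $a\,\mathrm{div}\,u$, $u\cdot\nabla u$, $\frac{\theta-a}{a+1}\nabla a$, $\theta\,\mathrm{div}\,u$, $u\cdot\nabla\theta$, $\frac{a}{a+1}\Delta\theta$ are controlled by Besov product and composition estimates \emph{measured in} $\dB^{-\sigma_{1}}_{2,\infty}$, paired with $\dB^{\frac{d}{2}}_{2,1}$-norms of the solution (which are $\lesssim\cX$). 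This is precisely where the range $\sigma_{1}\in(-\frac{d}{2},\frac{d}{2}]$ is forced, so that each such product lands in $\dB^{-\sigma_{1}}_{2,\infty}$. Integrating in time and absorbing the nonlinear terms by $\cX\ll 1$ should give $\sup_{t\ge 0}\|(a,u,\theta)(t)\|^{\ell}_{\dB^{-\sigma_{1}}_{2,\infty}}\le C\delta_{0}$.

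\emph{Stage 2: the decay \eqref{decay}.} After splitting into low and high frequencies, the key low-frequency step is to build, block by block, a Lyapunov functional $\cL^{\ell}_{j}(t)\simeq\|\ddj(a,u,\theta)(t)\|_{L^{2}}^{2}$ that also carries the cross terms $\langle\nabla\ddj a,\ddj u\rangle$ and $\langle\nabla\ddj\theta,\ddj u\rangle$, which make the degenerate dissipation of $a$ visible, so that
\begin{equation*}
\frac{d}{dt}\cL^{\ell}_{j}+c\,2^{2j}\cL^{\ell}_{j}\ \le\ C\,\big\|\ddj(\text{nonlinear terms})(t)\big\|_{L^{2}}\sqrt{\cL^{\ell}_{j}}\,.
\end{equation*}
Duhamel then bounds the linear part of $2^{js}\|\ddj(a,u,\theta)(t)\|_{L^{2}}$ by $2^{j(s+\sigma_{1})}e^{-c2^{2j}t}\delta_{0}$; summing over $2^{j}\lesssim 1$ with $s+\sigma_{1}>0$ produces the factor $t^{-\frac12(s+\sigma_{1})}$, and together with the uniform bound of Stage~1 (used for $t\le 1$) this gives $\|(a,u,\theta)(t)\|^{\ell}_{\dB^{s}_{2,1}}\lesssim\delta_{0}\lrt^{-\frac12(s+\sigma_{1})}$ for $s\in(-\sigma_{1},\frac{d}{2}]$. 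At high frequencies the analogous functional has $O(1)$ dissipation (and $2^{2j}$ for $\theta$), which — once the nonlinear low-to-high transfer is accounted for — decays fast enough for \eqref{decay}. The nonlinear Duhamel terms are closed by a bootstrap on
\begin{equation*}
\cD(t):=\sup_{s\in(-\sigma_{1},\frac{d}{2}]}\ \sup_{\tau\le t}\ \lrta^{\frac12(s+\sigma_{1})}\|(a,u,\theta)(\tau)\|^{\ell}_{\dB^{s}_{2,1}}\ +\ (\text{a weighted high-frequency norm}),
\end{equation*}
for which the nonlinear estimates should read $\cD(t)\le C\big(\delta_{0}+\cX\,\cD(t)+\cD(t)^{2}\big)$ and close since $\cX,\delta_{0}\ll 1$. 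The hard part is exactly this last point: bounding each nonlinear term in $\dB^{s}_{2,1}$ with the \emph{sharp} time-weight — splitting $u\cdot\nabla a$ so that $u$ sits in a low-index space and $\nabla a$ in a high-index one, treating $\frac{a}{a+1}\Delta\theta$ through the composition estimate together with the extra parabolic smoothing of $\theta^{h}$, etc. — which is where the non-classical product estimates and the low/high-frequency interpolations of the appendix are really needed.

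\emph{Stage 3: faster decay of $u$.} I would rewrite the velocity equation as a damped mode,
\begin{equation*}
\partial_{t}u+u=-\nabla(a+\theta)+\mathcal N,\qquad \mathcal N:=-u\cdot\nabla u-\frac{\theta-a}{a+1}\nabla a,
\end{equation*}
localise it and apply Grönwall block by block:
\begin{equation*}
\|\ddj u(t)\|_{L^{2}}\ \lesssim\ e^{-t}\|\ddj u_{0}\|_{L^{2}}+\int_{0}^{t}e^{-(t-\tau)}\Big(\|\ddj\nabla(a+\theta)(\tau)\|_{L^{2}}+\|\ddj\mathcal N(\tau)\|_{L^{2}}\Big)\,d\tau.
\end{equation*}
By \eqref{decay}, $\|\nabla(a+\theta)(\tau)\|_{\dB^{\sigma}_{2,1}}=\|(a+\theta)(\tau)\|_{\dB^{\sigma+1}_{2,1}}\lesssim\delta_{0}\lrta^{-\frac12(\sigma+1+\sigma_{1})}$ as long as $\sigma+1\le\frac{d}{2}$ (this is why $\sigma\in(-\sigma_{1},\frac{d}{2}-1]$, and $d\ge 2$ is what makes both $\sigma_{1}\in(-\frac{d}{2}+1,\frac{d}{2}]$ and this range non-empty), while $\mathcal N$, being quadratic, decays at least as fast by the product estimates and \eqref{decay}; since convolution against $e^{-(t-\tau)}$ preserves a sub-exponential polynomial rate, $\int_{0}^{t}e^{-(t-\tau)}\lrta^{-\beta}\,d\tau\lesssim\lrt^{-\beta}$, this yields $\|u(t)\|_{\dB^{\sigma}_{2,1}}\lesssim\delta_{0}\lrt^{-\frac12(1+\sigma+\sigma_{1})}$. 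Running the same computation with $\sigma$ replaced by $-\sigma_{1}$ (which now requires $-\sigma_{1}+1\le\frac{d}{2}$, i.e. $\sigma_{1}>-\frac{d}{2}+1$) gives $\|u(t)\|_{\dB^{-\sigma_{1}}_{2,\infty}}\lesssim\delta_{0}\lrt^{-\frac12}$, and interpolating between these two endpoints recovers the whole stated range. I expect Stage~2 — building the block Lyapunov functional that sees the degenerate dissipation of $a$, and especially carrying the exact time-decay budget through the non-classical nonlinear Besov estimates — to be the main obstacle; Stages~1 and~3 are comparatively routine once that machinery is in place.
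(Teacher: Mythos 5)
Your Stage~1 (propagation of the $\dB^{-\sigma_{1}}_{2,\infty}$ norm via products paired with $\dB^{\frac d2}_{2,1}$-norms of size $\cX\ll1$) and your Stage~3 (the damped mode $\partial_t u+u=-\nabla(a+\theta)+\mathcal N$ treated by Duhamel against $e^{-(t-\tau)}$) coincide with Lemma~\ref{LowEsta} and with the end of Section~\ref{S2.2}, and are fine. The problem is Stage~2. You close the decay estimate by a bootstrap on a time-weighted sup-norm $\cD(t)$ with the inequality $\cD(t)\le C\bigl(\delta_{0}+\cX\,\cD(t)+\cD(t)^{2}\bigr)$ and then invoke ``$\cX,\delta_{0}\ll1$''. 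But the theorem does not assume $\delta_{0}$ small: the only smallness hypothesis is $\cX_{0}\le\varepsilon_{0}$ from Theorem~\ref{GlobalExist}, while $\|(a_0,u_0,\theta_0)\|^{\ell}_{\dB^{-\sigma_{1}}_{2,\infty}}$ is merely assumed finite and can be arbitrarily large (smallness of $\delta_0$ would imply smallness of $\cX_0^{\ell}$, but not conversely). Removing exactly this smallness is the stated point of the paper, so a closure containing $\cD(t)^{2}$ proves a strictly weaker statement. This is a genuine gap, not a cosmetic one: in the Duhamel framework the quadratic term is hard to avoid, because at least one factor of each bilinear term must carry a time weight taken from $\cD$, and the natural pairing puts weights on both.

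What the paper does instead is structurally designed to keep the unknown time-weighted quantity \emph{linear} in every nonlinear estimate. It introduces the integral-in-time functional $\cX_{M}$ of \eqref{**1} with a large exponent $M>1+\frac12(\frac d2+\sigma_{1})$, multiplies the block Lyapunov inequalities \eqref{ELowL} and \eqref{high*} by $(1+t)^{2M}$, and absorbs the resulting lower-order term $(1+t)^{2M-1}\|(\ddj a,\ddj u,\ddj\theta)\|_{L^{2}}^{2}$ by interpolating $\dB^{\frac d2}_{2,1}$ between $\dB^{-\sigma_{1}}_{2,\infty}$ (bounded by $\delta_{0}$ from Lemma~\ref{LowEsta}, entering only through a fixed power $\eta_{0}$ times an explicit $(1+t)$-power) and the dissipation norms already present in $\cX_{M}$, followed by Young's inequality; see \eqref{0001*}--\eqref{0006*}. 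Every nonlinearity is then estimated as $\cX(t)\,\cX_{M}(t)$ with $\cX(t)\lesssim\varepsilon_{0}$, never as $\cX_{M}^{2}$ or $\delta_{0}\cX_{M}$, so the closure $\cX_{M}\lesssim\delta_{0}(1+t)^{M-\frac12(\frac d2+\sigma_{1})}+\varepsilon_{0}\cX_{M}$ holds for arbitrary $\delta_{0}$. Dividing by $(1+t)^{M}$ and interpolating with the uniform $\dB^{-\sigma_{1}}_{2,\infty}$ bound then yields \eqref{decay} for all $\sigma\in(-\sigma_{1},\frac d2]$. To repair your argument you would either have to adopt this time-weighted energy scheme, or restructure your Duhamel bootstrap so that one factor of each product is always measured in an unweighted norm controlled by $\cX\lesssim\varepsilon_{0}$ while still delivering the full decay rate, which is precisely the difficulty your quadratic closure sidesteps by assuming $\delta_{0}$ small.
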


\begin{rem}
More precisely, one can obtain the classical decay rates. For instance, if taking $\sigma=\sigma_1=\frac{3}{2}$, then the solution decays in the spatially critical space $L^{\infty}$ at the rate of 
$\cO(t^{-\frac{3}{2}})$. If taking $\sigma=0,~\sigma_1=\frac{3}{2}$, then the solution decays in $L^2$ at the rate of
$\cO(t^{-\frac{3}{4}})$ and the damping term $u$ in the velocity equation is at the enhanced rate of $\cO(t^{-\frac{5}{4}})$, which were shown by Chen, Tan and Wu {\rm\cite{ChenTanWu2015}} in the framework of Sobolev spaces with higher regularity.
\end{rem}

To show Theorem \ref{decay-2}, our approach is based on new time-weighted energy estimates and interpolation arguments, which is an adaptation of the previous approaches in \cite{LiShou2023, Xin-2021} and enable us to derive the optimal time-decay rates of the solution $(a,u,\theta)$ in \eqref{decay} (refer
to Lemmas \ref{LowEsta}-\ref{LowEsta-1}). Furthermore, in order to improve our low frequency analysis, we will exhibit a damped mode with better decay properties than the whole solution. Our approach will enable us to remove the additional requirement of smallness of low frequencies.\\

\textbf{Notation.} Throughout the paper, we denote by C harmless positive constants that may change from one line to the other. We sometimes write $A\lesssim B$ instead of $A\leq CB$. Likewise, $A\thicksim B$ means that $C_1B\leq A \leq C_2B$ with absolute constants $C_1, C_2$. For any Banach space $X$ and $f, g\in X$, we agree that $\|(f,g)\|_{X}\triangleq\|f\|_{X}+\|g\|_{X}$. For all $T>0$ and $\varrho\in [1,\infty]$, we denote by $L^{\varrho}_{T}(X)\triangleq L^{\varrho}([0,T];X)$ the set of measurable function $f:[0,T]\rightarrow X$ such that $t\mapsto\|f(t)\|_X$ is in $L^{\varrho}(0,T)$.

%For $X$ a Banach space, $p\in [1,\infty]$ and $T\in (0,\infty]$  the notation $L^p(0,T; X)$ or $L^p_T(X)$ designates the set of measurable functions  For any interval $I$ of $\R$, we agree that $\cC(I; X)$ (resp.$\cC_b(I; X)$) denotes the set of continuous (resp. continuous and bounded) functions from $I$ to $X$.

\section{Low-frequency and High-frequency analysis}\label{Low-High}
We are going to establish a Lyapunov-type inequality for energy norms by using a pure energy argument. For clarity, the proof is divided into several steps. %In this section, we first derive the low-frequency and high-frequency analysis.
\subsection{Low-frequency analysis} In this subsection, we first establish the a priori estimates of solutions to the Cauchy problem (\ref{2.2}) in the low-frequency region $\{\xi\in\mathbb{R}^{3}\mid|\xi|\leq\frac{8}{3}\}$. Note that \eqref{1dlg-E4-3.6} implies
\begin{equation}\label{bounded}
\begin{split}
&\sup\limits_{(x,t)\in\R^{d}\times(0,T)}|a(x,t)|\leq\frac{1}{2}\Rightarrow\frac{1}{2}\leq\varrho=1+a\leq\frac{3}{2},\\
&\sup\limits_{(x,t)\in\R^{d}\times(0,T)}|\theta(x,t)|\leq\frac{1}{2}\Rightarrow\frac{1}{2}\leq T=1+\theta\leq\frac{3}{2},\quad\mbox{if}~\cX_{0}\ll1.
\end{split}
\end{equation}
The property \eqref{bounded} will be used to handle the nonlinear terms $\frac{\theta-a}{a+1}$ and $\frac{1}{1+a}$
in \eqref{2.2} by the continuity of
composition estimates in Proposition \ref{ParalinearizationTheorem}. For any $j\in\Z$, applying the operator $\ddj$ to \eqref{2.2}, we get
\begin{equation}\label{LocalizationEq}
\left\{
\begin{array}{l}\partial_{t}\ddj a+\ddj\text{div} u + \ddj\text{div}(au)=0,\\ [1mm]
\partial_{t}\ddj u + \ddj\nabla a + \ddj u + \ddj\nabla\theta + \ddj(u\cdot\nabla u) + \ddj(\frac{\theta-a}{a+1}\nabla a)=0,\\[1mm]
\partial_{t}\ddj\theta -\ddj\Delta\theta +\ddj\text{div}u + \ddj\text{div}(u\theta) +\ddj(\frac{a}{a+1}\Delta\theta)=0.\\[1mm]
 \end{array} \right.
\end{equation}

First, we derive a low-frequency Lyapunov type inequality of \eqref{LocalizationEq}.
\begin{lem}\label{LowL}
Let $(a,u,\theta)$ be any strong solution to the Cauchy problem \eqref{2.2}. Then, it holds for any $j\leq0$ that
\begin{eqnarray}\label{ELowL}
\begin{aligned}
&\frac{d}{dt}\|(\ddj a,\ddj u, \ddj\theta)\|^2_{L^2} + 2^{2j}\|(\ddj a, \ddj\theta)\|^2_{L^2}+\|\ddj u\|^2_{L^2}\\
&\quad\lesssim
\|(\ddj(au), \ddj(u\cdot\nabla u),\ddj(\frac{\theta-a}{a+1}\nabla a))\|_{L^2}\|(\ddj u, \ddj\nabla a)\|_{L^2}  \\
&\quad\quad+ \|(\ddj(u\theta),\ddj\big(\frac{a}{a+1}\nabla\theta\big))\|_{L^2}\|\ddj\nabla\theta\|_{L^2}+\|\ddj\big(\nabla(\frac{a}{a+1})\nabla\theta\big)\|_{L^2}\|\ddj\theta\|_{L^2}.
%&\quad+ \eta_1\|\ddj\text{div}(au)\|_{L^2}\|\ddj\text{div}u\|_{L^2}+ \eta_1\|\ddj(u\cdot\nabla u)\|_{L^2}\|\ddj\nabla a\|_{L^2} 
%+ \eta_1\|\ddj\big(\frac{\theta-a}{a+1}\nabla a\big)\|_{L^2}\|\ddj\nabla a\|_{L^2},
\end{aligned}
\end{eqnarray}
\end{lem}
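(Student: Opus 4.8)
The plan is to derive \eqref{ELowL} by the standard frequency-localized energy method applied to \eqref{LocalizationEq}. First I would take the $L^2$ inner product of the first equation of \eqref{LocalizationEq} with $\ddj a$, of the second with $\ddj u$, and of the third with $\ddj\theta$, then add the three identities. The linear coupling terms $\langle \ddj\mathrm{div}\,u,\ddj a\rangle + \langle\ddj\nabla a,\ddj u\rangle$ cancel by integration by parts, and similarly $\langle\ddj\mathrm{div}\,u,\ddj\theta\rangle+\langle\ddj\nabla\theta,\ddj u\rangle$ cancel against the symmetric terms; the friction term $\langle\ddj u,\ddj u\rangle$ produces $\|\ddj u\|_{L^2}^2$, and the heat term $-\langle\ddj\Delta\theta,\ddj\theta\rangle = \|\ddj\nabla\theta\|_{L^2}^2 \thicksim 2^{2j}\|\ddj\theta\|_{L^2}^2$ by Bernstein's inequality. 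The right-hand side is then exactly the sum of $L^2$ inner products of the nonlinear terms against $\ddj a$, $\ddj u$, $\ddj\theta$ (or their gradients, after integrating by parts in $\ddj\mathrm{div}(au)$, $\ddj\mathrm{div}(u\theta)$, and in the term $\ddj(\frac{a}{a+1}\Delta\theta)$), which one bounds by Cauchy–Schwarz.

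The one genuinely non-obvious point — and the reason the statement has the particular form it does — is the damping term $2^{2j}\|(\ddj a,\ddj\theta)\|_{L^2}^2$ on the left-hand side. The heat equation gives the $\theta$-part directly via the Laplacian, but there is no parabolic smoothing on $a$; its low-frequency decay comes instead from the hyperbolic coupling with $u$. So the second ingredient I would use is a cross term: apply $\ddj$ to the first two equations, then form $\frac{d}{dt}\langle\ddj u,\ddj\nabla a\rangle$ (equivalently $-\frac{d}{dt}\langle\ddj\mathrm{div}\,u,\ddj a\rangle$). Using the $a$-equation to replace $\partial_t\ddj a$ and the $u$-equation to replace $\partial_t\ddj u$, one gets a term $-\|\ddj\nabla a\|_{L^2}^2 \thicksim -2^{2j}\|\ddj a\|_{L^2}^2$ together with $+\|\ddj\mathrm{div}\,u\|_{L^2}^2 \lesssim 2^{2j}\|\ddj u\|_{L^2}^2$ (harmless at low frequencies, absorbed by $\varepsilon 2^{2j}\cdot 2^{2j}\|\ddj u\|_{L^2}^2 \leq \varepsilon\|\ddj u\|_{L^2}^2$ since $2^{2j}\leq 1$ for $j\leq 0$), plus the remaining linear term $\langle\ddj(u+\nabla\theta),\ddj\nabla a\rangle$ and the nonlinear contributions. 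The term $\langle\ddj\nabla\theta,\ddj\nabla a\rangle$ is controlled by $\varepsilon 2^{2j}\|\ddj a\|_{L^2}^2 + C\|\ddj\nabla\theta\|_{L^2}^2$ and $\langle\ddj u,\ddj\nabla a\rangle$ by $\varepsilon 2^{2j}\|\ddj a\|_{L^2}^2 + C2^{-2j}\|\ddj u\|_{L^2}^2$ — but this last one is \emph{not} harmless, so one must instead keep $\langle\ddj u,\ddj\nabla a\rangle$ and absorb it differently: bound it as $\langle\ddj u,\ddj\nabla a\rangle \leq \eta\|\ddj\nabla a\|_{L^2}^2 + C_\eta\|\ddj u\|_{L^2}^2$, which is fine.

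Then I would define the Lyapunov functional $\cL_j \triangleq \|(\ddj a,\ddj u,\ddj\theta)\|_{L^2}^2 + \kappa\, 2^{-2j}\langle\ddj u,\ddj\nabla a\rangle$ for a small fixed $\kappa>0$ (the $2^{-2j}$ normalization makes the cross term of the same size as $\|(\ddj a,\ddj u,\ddj\theta)\|_{L^2}^2$, so that $\cL_j \thicksim \|(\ddj a,\ddj u,\ddj\theta)\|_{L^2}^2$). Differentiating $\cL_j$ and combining the two computations above, choosing $\kappa$ small enough to absorb the $\|\ddj\mathrm{div}\,u\|_{L^2}^2$ and $\|\ddj\nabla\theta\|_{L^2}^2\thicksim 2^{2j}\|\ddj\theta\|_{L^2}^2$ contributions from the cross term into the good terms $\|\ddj u\|_{L^2}^2$ and $2^{2j}\|\ddj\theta\|_{L^2}^2$ already present, yields $\frac{d}{dt}\cL_j + 2^{2j}\|(\ddj a,\ddj\theta)\|_{L^2}^2 + \|\ddj u\|_{L^2}^2 \lesssim (\text{nonlinear RHS})$. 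Since $\cL_j$ and $\|(\ddj a,\ddj u,\ddj\theta)\|_{L^2}^2$ are equivalent, replacing $\frac{d}{dt}\cL_j$ by $\frac{d}{dt}\|(\ddj a,\ddj u,\ddj\theta)\|_{L^2}^2$ up to adjusting constants gives \eqref{ELowL}.

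The main obstacle is bookkeeping the cross term correctly: one needs the $2^{-2j}$ weighting precisely so that $2^{-2j}\|\ddj\mathrm{div}(au)\|_{L^2}\lesssim\|\ddj(au)\|_{L^2}$ and $2^{-2j}\langle\ddj u,\ddj\nabla a\rangle$-type remainders stay within the claimed right-hand side after using $j\leq 0$, and one must make sure every nonlinear term that appears — including those coming from $\partial_t\ddj a$ and $\partial_t\ddj u$ inside $\frac{d}{dt}\langle\ddj u,\ddj\nabla a\rangle$, such as $\langle\ddj\mathrm{div}(au),\ddj\nabla a\rangle$ and $\langle\ddj(u\cdot\nabla u),\ddj\nabla a\rangle$ and $\langle\ddj(\frac{\theta-a}{a+1}\nabla a),\ddj\nabla a\rangle$ — collapses into one of the three products listed in \eqref{ELowL} (the last product, $\|\ddj(\nabla(\frac{a}{a+1})\nabla\theta)\|_{L^2}\|\ddj\theta\|_{L^2}$, arises from rewriting $\frac{a}{a+1}\Delta\theta = \mathrm{div}(\frac{a}{a+1}\nabla\theta) - \nabla(\frac{a}{a+1})\cdot\nabla\theta$ before integrating by parts). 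Everything else is routine Cauchy–Schwarz and Bernstein.
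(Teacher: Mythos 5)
Your overall strategy is the same as the paper's: the basic $L^2$ energy identity for \eqref{LocalizationEq} (with $\frac{a}{a+1}\Delta\theta$ rewritten as $\mathrm{div}(\frac{a}{a+1}\nabla\theta)-\nabla(\frac{a}{a+1})\cdot\nabla\theta$ before integrating by parts), supplemented by the cross term $\frac{d}{dt}\langle\ddj u,\ddj\nabla a\rangle$ to recover dissipation of $a$, and a perturbed Lyapunov functional. However, there is a concrete error in the normalization of the cross term, and as written the argument fails at low frequencies.

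You take $\cL_j=\|(\ddj a,\ddj u,\ddj\theta)\|_{L^2}^2+\kappa\,2^{-2j}\langle\ddj u,\ddj\nabla a\rangle$ and claim the $2^{-2j}$ weight makes the cross term comparable to the energy. This is backwards: by Bernstein, $\|\ddj\nabla a\|_{L^2}\sim 2^{j}\|\ddj a\|_{L^2}$, so
$2^{-2j}|\langle\ddj u,\ddj\nabla a\rangle|\sim 2^{-j}\|\ddj u\|_{L^2}\|\ddj a\|_{L^2}$,
and $2^{-j}\to\infty$ as $j\to-\infty$. Hence $\cL_j$ is not equivalent to $\|(\ddj a,\ddj u,\ddj\theta)\|_{L^2}^2$ for any fixed $\kappa>0$, and the indefinite dissipation terms inherited from the cross identity, e.g. $\kappa 2^{-2j}\langle\ddj u,\ddj\nabla a\rangle$ and the $C_\eta\kappa 2^{-2j}\|\ddj u\|_{L^2}^2$ produced by your own absorption of $\langle\ddj u,\ddj\nabla a\rangle$, cannot be controlled by $\|\ddj u\|_{L^2}^2+2^{2j}\|\ddj a\|_{L^2}^2$: one would need $2^{-j}xy\le \epsilon_1 y^2+\epsilon_2 x^2$ with $\epsilon_1\epsilon_2\gtrsim 2^{-2j}$, which forces an unbounded constant. (For the same reason your claim $2^{-2j}\|\ddj\mathrm{div}(au)\|_{L^2}\lesssim\|\ddj(au)\|_{L^2}$ is false for $j\le 0$.) The $2^{-2j}$ weight is the correct one only in the high-frequency regime $j\ge -1$, where $2^{-j}\le 2$; you have transplanted it to the wrong regime. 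Note also that your weighted cross term would yield the dissipation $\kappa 2^{-2j}\|\ddj\nabla a\|_{L^2}^2\sim\kappa\|\ddj a\|_{L^2}^2$, i.e.\ full exponential damping of $a$ at low frequencies, which contradicts the parabolic low-frequency behavior $e^{-c|\xi|^2t}$ of $a$ visible in the spectral analysis — a sign something is wrong. The fix is exactly what the paper does: use a \emph{constant} weight $\eta_1\in(0,1)$, $\cE_{1,j}=\frac12\|(\ddj a,\ddj u,\ddj\theta)\|_{L^2}^2+\eta_1\langle\ddj\nabla a,\ddj u\rangle$. Then $|\langle\ddj\nabla a,\ddj u\rangle|\lesssim 2^{j}\|\ddj a\|_{L^2}\|\ddj u\|_{L^2}\le\|\ddj a\|_{L^2}\|\ddj u\|_{L^2}$ for $j\le 0$, so the equivalence \eqref{4*} holds, the cross identity contributes the good term $\eta_1\|\ddj\nabla a\|_{L^2}^2\sim\eta_1 2^{2j}\|\ddj a\|_{L^2}^2$ (which is all the lemma claims), and the bad terms $-\eta_1\|\ddj\mathrm{div}u\|_{L^2}^2$, $\eta_1\langle\ddj u,\ddj\nabla a\rangle$, $\eta_1\langle\ddj\nabla\theta,\ddj\nabla a\rangle$ are absorbed for $\eta_1$ small. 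With that single correction the rest of your computation goes through and coincides with the paper's proof.
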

\begin{proof}
Taking the $L^2$ inner product of $\eqref{LocalizationEq}_{1}$, $\eqref{LocalizationEq}_{2}$ and $\eqref{LocalizationEq}_{3}$ with $\ddj a$, $\ddj u$ and $\ddj \theta$, respectively, we have
\begin{equation}\label{LocalizationEq1}
\frac{1}{2}\frac{d}{dt}\|\ddj a\|^2_{L^2}=-\int_{\R^d}\ddj\text{div} u\ddj adx-\int_{\R^d}\ddj\text{div}(au)\ddj adx,
\end{equation}
\begin{equation}\label{LocalizationEq2}
\begin{split}
\frac{1}{2}\frac{d}{dt}\|\ddj u\|^2_{L^2}+\|\ddj u\|^2_{L^2}&=-\int_{\R^d}\ddj\nabla a\ddj udx-\int_{\R^d}\ddj\nabla\theta\ddj udx\\
&\quad-\int_{\R^d}\ddj(u\cdot\nabla u)\ddj udx-\int_{\R^d}\ddj(\frac{\theta-a}{a+1}\nabla a)\ddj udx,
\end{split}
\end{equation}
and
\begin{equation}\label{LocalizationEq3}
\begin{split}
\frac{1}{2}\frac{d}{dt}\|\ddj \theta\|^2_{L^2}+\|\nabla\theta\|^2_{L^2}&=-\int_{\R^d}\ddj\text{div}(u\theta)\ddj \theta dx-\int_{\R^d}\ddj\text{div}u\ddj \theta dx\\
&\quad-\int_{\R^d}\ddj(\frac{a}{a+1}\Delta\theta)\ddj \theta dx.
\end{split}
\end{equation}
By performing an integration
by parts in the third term on the right-hand side of \eqref{LocalizationEq3}, we get
\begin{align*}
&\int_{\R^d}\ddj(\frac{a}{a+1}\Delta\theta)\ddj\theta dx\\
&=\int_{\R^d}\ddj\text{div}\big(\frac{a}{a+1}\nabla\theta\big)\ddj\theta dx- \int_{\R^d}\ddj\big(\nabla(\frac{a}{a+1})\nabla\theta\big)\ddj\theta dx\\
&=-\int_{\R^d}\ddj\big(\frac{a}{a+1}\nabla\theta\big)\ddj\nabla\theta dx- \int_{\R^d}\ddj\big(\nabla(\frac{a}{a+1})\nabla\theta\big)\ddj\theta dx.
\end{align*}
The combination of \eqref{LocalizationEq1}-\eqref{LocalizationEq3} leads to
\begin{equation}\label{LocalizationEq4}
\begin{split}
&\frac{1}{2}\frac{d}{dt}\|(\ddj a,\ddj u,\ddj\theta)\|^2_{L^2} + \|\ddj u\|^2_{L^2} + \|\ddj\nabla\theta\|^2_{L^2} \\
&\quad=-\int_{\R^d}\ddj\text{div}(au)\ddj adx- \int_{\R^d}\ddj(u\cdot\nabla u)\ddj udx - \int_{\R^d}\ddj(\frac{\theta-a}{a+1}\nabla a)\ddj udx \\
&\quad\quad- \int_{\R^d}\ddj\text{div}(u\theta)\ddj\theta dx +\int_{\R^d}\ddj\big(\frac{a}{a+1}\nabla\theta\big)\ddj\nabla\theta dx+ \int_{\R^d}\ddj\big(\nabla(\frac{a}{a+1})\nabla\theta\big)\ddj\theta dx.
\end{split}
\end{equation}

In order to obtain the dissipation of $a$, we make use of $\eqref{LocalizationEq}_{1}$ and $\eqref{LocalizationEq}_{2}$ to have
\begin{eqnarray}\label{123456}
\begin{aligned}
&\frac{d}{dt}\int_{\R^d}\ddj \nabla a\ddj udx + \|\ddj \nabla a\|_{L^2}^2-\|\ddj \text{div} u\|_{L^2}^2  + \int_{\R^d}\ddj u\ddj\nabla adx\\
&\quad+ \int_{\R^d}\ddj\nabla\theta\ddj\nabla adx=\int_{\R^d}\ddj\text{div}(au)\ddj\text{div}udx  \\
&\quad- \int_{\R^d}\ddj(u\cdot\nabla u)\ddj\nabla adx 
- \int_{\R^d}\ddj\big(\frac{\theta-a}{a+1}\nabla a\big)\ddj\nabla adx.
\end{aligned}
\end{eqnarray}

Furthermore, adding up \eqref{123456} (multiplying by $\eta_{1}$ for some $\eta_1\in(0,1)$) to \eqref{LocalizationEq4} yields
%We multiply $\eta_1$ (where $\eta_1\in(0,1)$ is a sufficiently small constant) to \eqref{4.6} and then sum up \eqref{LocalizationEq4}. One can show that
\begin{eqnarray}\label{1234567}
\begin{aligned}
&\frac{d}{dt}\cE_{1,j}(t) + \cD_{1,j}(t)\\
&\quad\lesssim(1+2^{2j}\eta_{1})
\|\ddj(au)\|_{L^2}\|(\ddj \nabla a, \ddj u)\|_{L^2} \\
&\quad\quad+ (1+\eta_{1})\|(\ddj(u\cdot\nabla u),\ddj(\frac{\theta-a}{a+1}\nabla a))\|_{L^2}\|(\ddj u, \ddj\nabla a)\|_{L^2}  \\
&\quad\quad+ \|(\ddj(u\theta),\ddj\big(\frac{a}{a+1}\nabla\theta\big))\|_{L^2}\|\ddj\nabla\theta\|_{L^2}+\|\ddj\big(\nabla(\frac{a}{a+1})\nabla\theta\big)\|_{L^2}\|\ddj\theta\|_{L^2},
%&\quad+ \eta_1\|\ddj\text{div}(au)\|_{L^2}\|\ddj\text{div}u\|_{L^2}+ \eta_1\|\ddj(u\cdot\nabla u)\|_{L^2}\|\ddj\nabla a\|_{L^2} 
%+ \eta_1\|\ddj\big(\frac{\theta-a}{a+1}\nabla a\big)\|_{L^2}\|\ddj\nabla a\|_{L^2},
\end{aligned}
\end{eqnarray}
where $\cE_{1,j}(t)$ and $\cD_{1,j}(t)$ are defined by
\begin{eqnarray*}
\left\{
\begin{aligned}
\cE_{1,j}(t)&\triangleq\frac{1}{2}\|(\ddj a,\ddj u,\ddj\theta)\|^2_{L^2} + \eta_1\int_{\R^d}\ddj \nabla a\ddj udx,\\
\cD_{1,j}(t)&\triangleq\|\ddj u\|^2_{L^2} + \|\ddj\nabla\theta\|^2_{L^2}+ \eta_1\|\ddj \nabla a\|_{L^2}^2 - \eta_1\|\ddj\text{div}u\|^2_{L^2}\\[1mm]
&\quad
+\eta_1\int_{\R^d}\ddj u\ddj\nabla adx
+\eta_1\int_{\R^d}\ddj\nabla\theta\ddj\nabla adx
\end{aligned}
\right.
\end{eqnarray*}
with constant $\eta_1 \in(0,1)$ to be determined later. %Recall that $\cE_{1,j}(t)$ and $\cD_{1,j}(t)$ given by \eqref{4.006} satisfy the Lyapunov type inequality \eqref{4.6}. 
One can show for any $j \leq 0$ that
\begin{equation}\label{4*}
(\frac{1}{2}-\frac{\eta_1}{2})\|(\ddj a,\ddj u,\ddj\theta)\|^2_{L^2}\leq \cE_{1,j}(t) \leq(\frac{1}{2}+\frac{\eta_1}{2})\|(\ddj a,\ddj u, \ddj\theta)\|^2_{L^2}
\end{equation}
and
\begin{eqnarray}\label{5*}
\begin{aligned}
\cD_{1,j}(t)\geq C\eta_1 2^{2j}\|\ddj a\|^2_{L^2} + (1-2^{2j}\eta_1-C\eta_1)\|\ddj u\|^2_{L^2} + 2^{2j}(1-C\eta_1)\|\ddj\theta\|^2_{L^2},
\end{aligned}
\end{eqnarray}
where $C>1$ denotes a sufficiently large constant independent of time. Choosing a sufficiently small constant $\eta_{1}\in(0,1)$, we deduce by \eqref{4*}-\eqref{5*} for any $j\leq0$ that
\begin{equation}\label{6*}
\cE_{1,j}(t) \sim\|(\ddj a,\ddj u, \ddj\theta)\|^2_{L^2}
\end{equation}
and
\begin{equation}\label{7*}
\cD_{1,j}(t)\gtrsim 2^{2j}\|(\ddj a, \ddj\theta)\|^2_{L^2}+\|\ddj u\|^2_{L^2}.
\end{equation}
By \eqref{1234567} and \eqref{6*}-\eqref{7*}, \eqref{ELowL} follows.

\end{proof}

\subsection{High-frequency analysis}In this subsection, we establish solutions to the Cauchy problem \eqref{2.2} in the high-frequency region $\{\xi\in\mathbb{R}^{d}\mid|\xi|\geq\frac{3}{8}\}$. To this end, we show a high-frequency Lyapunov type inequality of \eqref{LocalizationEq}.
\begin{lem}\label{HighEsta}
Let $(a,u,\theta)$ be any strong solution to the Cauchy problem \eqref{2.2}. Then, it holds for any $j\geq-1$ that 
\begin{equation}\label{high*}
\begin{aligned}
&\frac{d}{dt}\|(\ddj a,\ddj u, \ddj\theta)\|^2_{L^2} + \|(\ddj a, \ddj u)\|^2_{L^2}+2^{2j}\|\ddj \theta\|^2_{L^2}\\
&\quad\lesssim \Big\|\frac{\partial}{\partial t}\Big(\frac{1+\theta}{(1+a)^2}\Big)\Big\|_{L^\infty}\|\ddj a\|_{L^2}^2 + \Big\|\nabla\Big(\frac{1+\theta}{1+a}\Big)\Big\|_{L^\infty}\|\ddj u\|_{L^2}\|\ddj a\|_{L^2} \\
&\quad\quad+ \Big\|\mathrm{div}\Big(\frac{1+\theta}{(1+a)^2}u\Big)\Big\|_{L^\infty}\|\ddj a\|^2_{L^2} + \|\mathrm{div}u\|_{L^\infty}\|\ddj u\|^2_{L^2}+\|\ddj (u\theta)\|_{L^2}\|\ddj \nabla\theta\|_{L^2} \\[1mm]
&\quad\quad+ \|\nabla(\frac{a}{1+a})\|_{L^\infty}\|\ddj\nabla\theta\|_{L^2}\|\ddj\theta\|_{L^2}+ \|\frac{a}{1+a}\|_{L^\infty}\|\ddj\nabla\theta\|_{L^2}^2 \\[1mm]
&\quad\quad+\|\tilde{R}^1_j\|_{L^2}\|\frac{1+\theta}{(1+a)^2}\|_{L^{\infty}}\|\ddj a\|_{L^2} + \|\tilde{R}^2_j\|_{L^2}\|\ddj u\|_{L^2} + \|\tilde{R}^3_j\|_{L^2}\|\ddj \theta\|_{L^2} \\[1mm]
&\quad\quad+2^{-2j}\|\ddj\mathrm{div}(au)\|_{L^2}\|\ddj\mathrm{div}u\|_{L^2}+2^{-2j}\|\ddj(u\cdot\nabla u)\|_{L^2}\|\ddj\nabla a\|_{L^2}\\
&\quad\quad+2^{-2j}\|\ddj\big(\frac{\theta-a}{a+1}\nabla a\big)\|_{L^2}\|\ddj\nabla a\|_{L^2}
\end{aligned}
\end{equation}
\end{lem}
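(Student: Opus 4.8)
The plan is to derive \eqref{high*} by a weighted $L^2$ energy argument on the localized system \eqref{LocalizationEq}, with the weights tuned so that the leading density--pressure coupling cancels, supplemented by a frequency-weighted cross term between $\ddj a$ and $\ddj u$ that exhibits the damping of $a$ in high frequencies. First I would rewrite $\eqref{LocalizationEq}_1$ in the conservative form $\partial_t\ddj a+\ddj\,\mathrm{div}\big((1+a)u\big)=0$, test it against $\frac{1+\theta}{(1+a)^2}\,\ddj a$, test $\eqref{LocalizationEq}_2$ against $\ddj u$ and $\eqref{LocalizationEq}_3$ against $\ddj\theta$, and sum. Two cancellations take place: the antisymmetric coupling between $\int\ddj\nabla\theta\cdot\ddj u$ and $\int\ddj\mathrm{div}u\,\ddj\theta$; and --- this is the purpose of the weight, since $(1+a)\cdot\frac{1+\theta}{(1+a)^2}=\frac{1+\theta}{1+a}$ --- the term produced by an integration by parts in the density equation against the principal part $\int\frac{1+\theta}{1+a}\,\ddj\nabla a\cdot\ddj u$ of the full pressure term $\ddj\big(\frac{1+\theta}{1+a}\nabla a\big)=\ddj\nabla a+\ddj\big(\frac{\theta-a}{a+1}\nabla a\big)$ in the velocity equation. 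After pulling the convection operators out of the Littlewood--Paley blocks, and treating $\frac{a}{1+a}\Delta\theta=\mathrm{div}\big(\frac{a}{1+a}\nabla\theta\big)-\nabla\big(\frac{a}{1+a}\big)\cdot\nabla\theta$ exactly as in the proof of Lemma \ref{LowL}, the surviving terms are: the direct dissipation $\|\ddj u\|_{L^2}^2+\|\ddj\nabla\theta\|_{L^2}^2$; the term carrying $\big\|\partial_t\big(\frac{1+\theta}{(1+a)^2}\big)\big\|_{L^\infty}$, from differentiating the weight; the transport terms carrying $\big\|\mathrm{div}\big(\frac{1+\theta}{(1+a)^2}u\big)\big\|_{L^\infty}$ and $\|\mathrm{div}u\|_{L^\infty}$; the off-diagonal term carrying $\big\|\nabla\big(\frac{1+\theta}{1+a}\big)\big\|_{L^\infty}$, left over by the pressure integration by parts; the heat terms carrying $\big\|\frac{a}{1+a}\big\|_{L^\infty}$, $\big\|\nabla\big(\frac{a}{1+a}\big)\big\|_{L^\infty}$ and $\|\ddj(u\theta)\|_{L^2}$; and the commutators $[\ddj,\cdot]$ of the localization with the variable coefficients ($1+a$, $\frac{1+\theta}{1+a}$, $\frac{a}{1+a}$) and with the convection fields, which I collect into the remainders $\tilde R^1_j$, $\tilde R^2_j$, $\tilde R^3_j$ attached to the $a$-, $u$- and $\theta$-equations respectively (the weight $\frac{1+\theta}{(1+a)^2}$ being pulled out in $L^\infty$ from the first one). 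All these coefficients are finite by \eqref{bounded} and controlled through the composition estimates of Proposition \ref{ParalinearizationTheorem}.

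This basic estimate contains no dissipation of $a$, so next I would add $\eta\,2^{-2j}\frac{d}{dt}\int_{\R^d}\ddj\nabla a\cdot\ddj u\,dx$ for a small $\eta>0$, substituting $\eqref{LocalizationEq}_1$ and $\eqref{LocalizationEq}_2$. From $\partial_t\ddj u=-\ddj\nabla a-\ddj\big(\frac{\theta-a}{a+1}\nabla a\big)-\ddj u-\ddj\nabla\theta-\ddj(u\cdot\nabla u)$ this yields the dissipative term $-\eta\,2^{-2j}\|\ddj\nabla a\|_{L^2}^2\thicksim-\eta\|\ddj a\|_{L^2}^2$ (Bernstein's inequality on $j\geq-1$), the nonlinear contributions $2^{-2j}\|\ddj(u\cdot\nabla u)\|_{L^2}\|\ddj\nabla a\|_{L^2}$ and $2^{-2j}\|\ddj\big(\frac{\theta-a}{a+1}\nabla a\big)\|_{L^2}\|\ddj\nabla a\|_{L^2}$, and the couplings $\eta\,2^{-2j}\int\ddj u\cdot\ddj\nabla a$ and $\eta\,2^{-2j}\int\ddj\nabla\theta\cdot\ddj\nabla a$, both absorbed into the dissipation by Young's inequality since $2^{-j}\lesssim1$ on the high-frequency range; while from $\partial_t\ddj\nabla a=-\ddj\nabla\mathrm{div}u-\ddj\nabla\mathrm{div}(au)$ and one integration by parts one gets $\eta\,2^{-2j}\|\ddj\mathrm{div}u\|_{L^2}^2\lesssim\eta\|\ddj u\|_{L^2}^2$ (absorbed) and $2^{-2j}\|\ddj\mathrm{div}(au)\|_{L^2}\|\ddj\mathrm{div}u\|_{L^2}$.

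Putting $\cE_{2,j}(t):=\frac{1}{2}\int_{\R^d}\frac{1+\theta}{(1+a)^2}|\ddj a|^2\,dx+\frac{1}{2}\|(\ddj u,\ddj\theta)\|_{L^2}^2+\eta\,2^{-2j}\int_{\R^d}\ddj\nabla a\cdot\ddj u\,dx$ and letting $\cD_{2,j}(t)$ denote the sum of the dissipative terms, \eqref{bounded} gives $\frac{1+\theta}{(1+a)^2}\thicksim1$, while $\big|2^{-2j}\int\ddj\nabla a\cdot\ddj u\big|\lesssim2^{-j}\|(\ddj a,\ddj u)\|_{L^2}^2$ for $j\geq-1$ and $\|\ddj\nabla\theta\|_{L^2}\thicksim2^j\|\ddj\theta\|_{L^2}$; hence, taking $\eta$ small and absorbing the $\varepsilon$-parts of the couplings, $\cE_{2,j}\thicksim\|(\ddj a,\ddj u,\ddj\theta)\|_{L^2}^2$ and $\cD_{2,j}\gtrsim\|(\ddj a,\ddj u)\|_{L^2}^2+2^{2j}\|\ddj\theta\|_{L^2}^2$, and combining the two differential inequalities above yields \eqref{high*}.

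The main difficulty is the first estimate: one must arrange the cancellation of the \emph{full} variable-coefficient pressure $\frac{1+\theta}{1+a}\nabla a$ against the density equation --- not merely of its linearization $\nabla a$ --- which is what forces both the conservative form $\partial_t a+\mathrm{div}((1+a)u)=0$ and the precise weight $\frac{1+\theta}{(1+a)^2}$, and is exactly what produces the (unavoidable) term $\big\|\mathrm{div}\big(\frac{1+\theta}{(1+a)^2}u\big)\big\|_{L^\infty}\|\ddj a\|_{L^2}^2$. Moreover, the genuine second-order term $\frac{a}{1+a}\Delta\theta$ must be handled non-perturbatively, keeping $-\int\frac{a}{1+a}|\ddj\nabla\theta|^2$ on the right-hand side (to be absorbed later by the smallness of $\|a\|_{L^\infty}$) rather than estimating it as a lower-order quantity. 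The precise identification and estimation of $\tilde R^1_j$, $\tilde R^2_j$, $\tilde R^3_j$ is then routine in view of the commutator and product estimates recalled in the appendix.
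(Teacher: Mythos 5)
Your proposal is correct and follows essentially the same route as the paper: testing the density equation against the weight $\frac{1+\theta}{(1+a)^2}\ddj a$ so that the full variable-coefficient pressure term cancels, integrating the heat-conduction term by parts as in Lemma \ref{LowL}, collecting the localization commutators into $\tilde R^1_j,\tilde R^2_j,\tilde R^3_j$, and then adding the frequency-weighted cross functional $\eta\,2^{-2j}\int\ddj\nabla a\cdot\ddj u\,dx$ to recover the dissipation of $a$, with the same equivalences $\cE_{2,j}\thicksim\|(\ddj a,\ddj u,\ddj\theta)\|_{L^2}^2$ and $\cD_{2,j}\gtrsim\|(\ddj a,\ddj u)\|_{L^2}^2+2^{2j}\|\ddj\theta\|_{L^2}^2$. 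This is precisely the paper's argument, down to the choice of energy and dissipation functionals.
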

\begin{proof}
The Cauchy problem \eqref{LocalizationEq} can be reformulated as
\begin{equation}\label{4.14}
\left\{
\begin{array}{l}\partial_{t}\ddj a+ u\ddj\nabla a+ (1+a)\ddj\text{div}u=\tilde{R}^1_j,\\ [1mm]
 \partial_{t}\ddj u  + \ddj u + \ddj \nabla \theta + u\ddj \nabla u + \frac{1+\theta}{1+a}\ddj\nabla a=\tilde{R}^2_j,\\[1mm]
 \partial_t\ddj \theta +\ddj\text{div} u-\ddj\Delta\theta+\ddj\text{div}(u\theta) -\frac{a}{a+1}\ddj\Delta \theta=\tilde{R}^3_j,
 \end{array} \right.
\end{equation}
where
\begin{equation}\label{reminder}
\begin{aligned}
&\tilde{R}^1_j=-[\ddj, (1+a)]\text{div}u -[\ddj, u]\nabla a,\\
&\tilde{R}^2_j=-[\ddj, u] \nabla u - [\ddj, \frac{1+\theta}{1+a}]\nabla a,\\
&\tilde{R}^3_j= [\ddj,\frac{a}{a+1}]\Delta\theta.
\end{aligned}
\end{equation}
By similar arguments to Lemma \ref{LowL}, we obtain
\begin{equation}\label{4.1401}
\begin{split}
&\frac{1}{2}\frac{d}{dt}\int_{\R^d}\frac{1+\theta}{(1+a)^2}(\ddj a)^2dx  + \int_{\R^d}\frac{1+\theta}{1+a}\ddj\text{div}u\ddj adx + \int_{\R^d}\frac{1+\theta}{(1+a)^2}u\ddj\nabla a\ddj adx\\
&=\frac{1}{2}\int_{\R^d}\frac{\partial}{\partial t}\left(\frac{1+\theta}{(1+a)^2}\right)(\ddj a)^2dx+\int_{\R^d}\tilde{R}_j^1\frac{1+\theta}{(1+a)^2}\ddj adx,
\end{split}
\end{equation}
\begin{equation}\label{4.1402}
\begin{aligned}
&\frac{1}{2}\frac{d}{dt}\|\ddj u\|_{L^2}^2 +\|\ddj u\|_{L^2}^2 + \int_{\R^d}u\ddj\nabla u\ddj udx \\
&+\int_{\R^d}\ddj\nabla \theta\ddj udx + \int_{\R^d}\frac{1+\theta}{1+a}\ddj\nabla a\ddj udx= \int_{\R^d}\tilde{R}_j^2\ddj udx ,
\end{aligned}
\end{equation}
as well as
\begin{equation}\label{4.1403}
\begin{aligned}
&\frac{1}{2}\frac{d}{dt}\|\ddj \theta\|_{L^2}^2 +\|\ddj\nabla \theta\|_{L^2}^2 + \int_{\R^d}\ddj\text{div}u\ddj \theta dx+\int_{\R^d}\ddj\text{div}(u\theta)\ddj \theta dx \\
&-\int_{\R^d}\frac{a}{1+a}\ddj\Delta \theta\ddj\theta dx = \int_{\R^d}\tilde{R}_j^3\ddj \theta dx.
\end{aligned}
\end{equation}
The combination of \eqref{4.1401}-\eqref{4.1403} leads to
\begin{equation}\label{4.1404}
\begin{aligned}
&\quad\frac{1}{2}\frac{d}{dt}\left(\int_{\R^d}\frac{1+\theta}{(1+a)^2}(\ddj a)^2 + (\ddj u)^2 + (\ddj\theta)^2dx  \right)+ \|\ddj u\|_{L^2}^2 + \|\ddj\nabla\theta\|_{L^2}^2 \\
&\leq \frac{1}{2}\Big\|\frac{\partial}{\partial t}\Big(\frac{1+\theta}{(1+a)^2}\Big)\Big\|_{L^\infty}\|\ddj a\|_{L^2}^2 + \Big\|\nabla\Big(\frac{1+\theta}{1+a}\Big)\Big\|_{L^\infty}\|\ddj u\|_{L^2}\|\ddj a\|_{L^2} \\
&\quad+ \frac{1}{2}\Big\|\text{div}\Big(\frac{1+\theta}{(1+a)^2}u\Big)\Big\|_{L^\infty}\|\ddj a\|^2_{L^2} + \frac{1}{2}\|\text{div}u\|_{L^\infty}\|\ddj u\|^2_{L^2}+\|\ddj (u\theta)\|_{L^2}\|\ddj \nabla\theta\|_{L^2} \\[1mm]
&\quad+ \|\nabla(\frac{a}{1+a})\|_{L^\infty}\|\ddj\nabla\theta\|_{L^2}\|\ddj\theta\|_{L^2}+ \|\frac{a}{1+a}\|_{L^\infty}\|\ddj\nabla\theta\|_{L^2}^2 \\[1mm]
&\quad+\|\tilde{R}^1_j\|_{L^2}\|\frac{1+\theta}{(1+a)^2}\|_{L^{\infty}}\|\ddj a\|_{L^2} + \|\tilde{R}^2_j\|_{L^2}\|\ddj u\|_{L^2} + \|\tilde{R}^3_j\|_{L^2}\|\ddj \theta\|_{L^2}.
\end{aligned}
\end{equation}
By virtue of \eqref{123456} and \eqref{4.1404}, the Bernstein inequality and the fact $2^{-j}\leq 2$, the following inequality
\begin{equation}\label{4.1405}
\begin{aligned}
&\quad\frac{d}{dt}\cE_{2,j}(t)+\cD_{2,j}(t) \\
&\quad\leq \frac{1}{2}\Big\|\frac{\partial}{\partial t}\Big(\frac{1+\theta}{(1+a)^2}\Big)\Big\|_{L^\infty}\|\ddj a\|_{L^2}^2 + \Big\|\nabla\Big(\frac{1+\theta}{1+a}\Big)\Big\|_{L^\infty}\|\ddj u\|_{L^2}\|\ddj a\|_{L^2} \\
&\quad\quad+ \frac{1}{2}\Big\|\text{div}\Big(\frac{1+\theta}{(1+a)^2}u\Big)\Big\|_{L^\infty}\|\ddj a\|^2_{L^2} + \frac{1}{2}\|\text{div}u\|_{L^\infty}\|\ddj u\|^2_{L^2}+\|\ddj (u\theta)\|_{L^2}\|\ddj \nabla\theta\|_{L^2} \\[1mm]
&\quad\quad+ \|\nabla(\frac{a}{1+a})\|_{L^\infty}\|\ddj\nabla\theta\|_{L^2}\|\ddj\theta\|_{L^2}+ \|\frac{a}{1+a}\|_{L^\infty}\|\ddj\nabla\theta\|_{L^2}^2 \\[1mm]
&\quad\quad+\|\tilde{R}^1_j\|_{L^2}\|\frac{1+\theta}{(1+a)^2}\|_{L^{\infty}}\|\ddj a\|_{L^2} + \|\tilde{R}^2_j\|_{L^2}\|\ddj u\|_{L^2} + \|\tilde{R}^3_j\|_{L^2}\|\ddj \theta\|_{L^2} \\[1mm]
&\quad\quad+\eta_{2}2^{-2j}\|\ddj\text{div}(au)\|_{L^2}\|\ddj\text{div}u\|_{L^2}+\eta_{2}2^{-2j}\|\ddj(u\cdot\nabla u)\|_{L^2}\|\ddj\nabla a\|_{L^2}\\
&\quad\quad+\eta_{2}2^{-2j}\|\ddj\big(\frac{\theta-a}{a+1}\nabla a\big)\|_{L^2}\|\ddj\nabla a\|_{L^2}
\end{aligned}
\end{equation}
holds, where $\cE_{2,j}(t)$ and $\cD_{2,j}(t)$ are defined by
\begin{eqnarray*}
\left\{
\begin{aligned}
\cE_{2,j}(t)&\triangleq\frac{1}{2}\left(\int_{\R^d}\frac{1+\theta}{(1+a)^2}(\ddj a)^2 + (\ddj u)^2 + (\ddj\theta)^2dx  \right)+ \eta_2 2^{-2j}\int_{\R^d}\ddj \nabla a\ddj udx,\\
\cD_{2,j}(t)&\triangleq\|\ddj u\|^2_{L^2} + \|\ddj\nabla\theta\|^2_{L^2}+ \eta_2 2^{-2j}\|\ddj \nabla a\|_{L^2}^2 - \eta_2 2^{-2j}\|\ddj\text{div}u\|^2_{L^2}\\[1mm]
&\quad
+\eta_2 2^{-2j}\int_{\R^d}\ddj u\ddj\nabla adx
+\eta_2 2^{-2j}\int_{\R^d}\ddj\nabla\theta\ddj\nabla adx
\end{aligned}
\right.
\end{eqnarray*}
with constant $\eta_2 \in(0,1)$ to be determined later. Since \eqref{bounded}, we see that
$$\int_{\R^d}\frac{1+\theta}{(1+a)^2}(\ddj a)^2dx\leq\|\frac{1+\theta}{(1+a)^2}\|_{L^\infty}\|\ddj a\|^2_{L^2} \sim \|\ddj a\|^2_{L^2}.$$
It is easy to verify for any $j\geq-1$, one can choose a sufficiently small constant $\eta_{2}\in(0,1)$ so that we 
\begin{equation}\label{8*}
\cE_{2,j}(t) \sim\|(\ddj a,\ddj u, \ddj\theta)\|^2_{L^2}
\end{equation}
and
\begin{equation}\label{9*}
\cD_{1,j}(t)\gtrsim \|(\ddj a, \ddj u)\|^2_{L^2}+2^{2j}\|\ddj \theta\|^2_{L^2}.
\end{equation}
By vitrue of \eqref{4.6}, \eqref{4.1404}, the Bernstein inequality and the fact $2^{-j}\leq2$, \eqref{high*} holds.
\end{proof}

\section{The proof of Theorem \ref{decay-2}.} \label{S2.2}
In this section, we show Theorem \ref{decay-2} on the optimal time-decay rates of the strong solution to the Cauchy problem \eqref{2.2} in the case that $\|(a_0, u_0,\theta_0)\|^{\ell}_{\dB^{-\sigma_{1}}_{2,\infty}}$ is bounded.
\subsection{The regularity evolution of negative Besov norms.} Now, we turn to bound the evolution of negative Besov norm, which is the main ingredient in deriving the Lyapunov-type inequality for energy norms.
\begin{lem}\label{LowEsta}
Let $(a,u,\theta)$ be the global solution to the Cauchy problem \eqref{2.2} given by Theorem {\rm\ref{GlobalExist}}. Then, under the assumptions of Theorem {\rm\ref{decay-2}}, the following inequality holds:
\begin{equation}\label{3*}
\begin{aligned}
\cX_{L,\sigma_{1}}(t)&\triangleq\|(a,u,\theta)\|^{\ell}_{\tL_t^\infty(\dB^{-\sigma_{1}}_{2,\infty})} + \| a\|^{\ell}_{\tL_t^2(\dB^{-\sigma_{1}+1}_{2,\infty})} + \|u\|^{\ell}_{\tL_t^2(\dB^{-\sigma_{1}}_{2,\infty})} +\| \theta\|^{\ell}_{\tL_t^2(\dB^{-\sigma_{1}+1}_{2,\infty})}\\
&\leq C\delta_{0},~~t>0,
\end{aligned}
\end{equation}
where $\delta_{0}$ is defined by \eqref{decay-1}, where $C$ is a constant independent ot $t$. 
\end{lem}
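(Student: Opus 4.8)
The plan is to apply the low-frequency Lyapunov inequality of Lemma~\ref{LowL} in the Besov framework $\dB^{-\sigma_1}_{2,\infty}$ and close a Gronwall-type estimate using the global bounds $\cX\leq C\cX_0$ from Theorem~\ref{GlobalExist}. First I would fix $j\leq 0$, multiply \eqref{ELowL} by $2^{-2\sigma_1 j}$, and estimate each nonlinear term on the right. The structure of \eqref{ELowL} is that every product $\|\ddj(\text{nonlinearity})\|_{L^2}$ is paired with one of $\|\ddj u\|_{L^2}$, $\|\ddj\nabla a\|_{L^2}$, $\|\ddj\nabla\theta\|_{L^2}$ or $\|\ddj\theta\|_{L^2}$. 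After dividing by $\sqrt{\cE_{1,j}}\sim\|(\ddj a,\ddj u,\ddj\theta)\|_{L^2}$ — using \eqref{6*} — and integrating in time, I would obtain for each $j\leq 0$:
\begin{equation*}
2^{-\sigma_1 j}\|(\ddj a,\ddj u,\ddj\theta)(t)\|_{L^2} + c\!\int_0^t 2^{-\sigma_1 j}\big(2^{j}\|(\ddj a,\ddj\theta)\|_{L^2}+\|\ddj u\|_{L^2}\big)\,d\tau \lesssim 2^{-\sigma_1 j}\|(\ddj a_0,\ddj u_0,\ddj\theta_0)\|_{L^2} + (\text{nonlinear integrals}).
\end{equation*}
Taking the supremum over $j\leq 0$ converts the left side into $\cX_{L,\sigma_1}(t)$ (up to the usual $\tL^2$-in-time caveat handled via Minkowski) and the first term on the right into $\|(a_0,u_0,\theta_0)\|^{\ell}_{\dB^{-\sigma_1}_{2,\infty}}\leq\delta_0$.

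The core of the argument is bounding the nonlinear integrals. Each term is of the schematic form $\int_0^t \|\ddj N\|_{L^2}\,\|\ddj D\|_{L^2}\,d\tau$ where $N$ is a quadratic (or higher) nonlinearity like $au$, $u\cdot\nabla u$, $\frac{\theta-a}{a+1}\nabla a$, $u\theta$, $\frac{a}{a+1}\nabla\theta$, $\nabla(\frac{a}{a+1})\nabla\theta$, and $D$ is a dissipation factor. I would use the Cauchy--Schwarz inequality in time to split each integral as $\|\ddj N\|_{L^2_t(L^2)}\cdot\|\ddj D\|_{L^2_t(L^2)}$, weight by $2^{-2\sigma_1 j}$, and take $\sup_{j\leq 0}$; the $D$-factor is then controlled by the dissipation part of $\cX_{L,\sigma_1}(t)$ itself (for $D=u$ or $\nabla a$ or $\nabla\theta$) giving an absorbable $\cX_{L,\sigma_1}(t)$ on the right, while the $N$-factor must be estimated in $\tL^2_t(\dB^{-\sigma_1+1}_{2,\infty})$ or $\tL^2_t(\dB^{-\sigma_1}_{2,\infty})$ by the product/composition estimates of the appendix (Proposition~\ref{ParalinearizationTheorem} and the Bony decomposition). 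Here one should gain a factor $\cX\lesssim\cX_0$ from placing one factor of the product in the high-regularity norms $\tL^\infty_t(\dB^{d/2}_{2,1})$, $\tL^2_t(\dB^{d/2+1}_{2,1})$ etc.\ supplied by \eqref{IniCon}, and the remaining factor in a negative-index Besov space bounded by $\cX_{L,\sigma_1}(t)$. The restriction $\sigma_1\in(-d/2,d/2]$ is exactly what makes these non-classical product estimates in $\dB^{-\sigma_1}_{2,\infty}$ legitimate (the low-regularity index must stay above $-d/2$, and $-\sigma_1+1\leq d/2+1$).

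Combining, I arrive at $\cX_{L,\sigma_1}(t)\lesssim \delta_0 + \cX_0\,\cX_{L,\sigma_1}(t)$. Since $\cX_0\leq\varepsilon_0$ is small, the last term is absorbed into the left-hand side, yielding $\cX_{L,\sigma_1}(t)\leq C\delta_0$ uniformly in $t$, which is \eqref{3*}. The main obstacle I anticipate is the careful bookkeeping of the product estimates in $\dB^{-\sigma_1}_{2,\infty}$ for the borderline cases of $\sigma_1$ (especially near $\sigma_1=d/2$), where one must be sure that every term $N$ genuinely lands in $\tL^2_t(\dB^{-\sigma_1+1}_{2,\infty})$ with a constant times $\cX_0\,\cX_{L,\sigma_1}$; terms involving $\nabla(\frac{a}{a+1})\nabla\theta$ and the composition $\frac{a}{1+a}$ require the most delicate handling, since one derivative hits each low-frequency factor and one must exploit the $\tL^2_t$-smoothing of $a$ and $\theta$ at low frequencies from \eqref{IniCon} to close the loop. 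A secondary technical point is the standard issue that $\tL^2_t$-type norms of the dissipation are recovered only after summing the localized inequalities, so the ordering "localize, integrate, then $\sup_j$" must be respected and the Fatou/Minkowski inequality invoked when passing the time integral through the $\sup_j$.
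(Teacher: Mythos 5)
Your proposal follows essentially the same route as the paper: start from the Lyapunov inequality \eqref{ELowL}, integrate in time and reduce it to a linear inequality for $\|(\ddj a,\ddj u,\ddj\theta)\|_{L^2}$, weight by $2^{-\sigma_1 j}$ and take $\sup_{j\le 0}$, then bound each nonlinearity in $\tL^2_t(\dB^{-\sigma_1}_{2,\infty})$ (resp.\ $\tL^1_t$ for $\nabla(\tfrac{a}{a+1})\nabla\theta$) via Propositions \ref{ParalinearizationTheorem}--\ref{ProductEstimates} with one factor in the high-regularity norms of $\cX$ and the other in $\cX_{L,\sigma_1}$, and finally absorb $\cX(t)\,\cX_{L,\sigma_1}(t)$ using the smallness $\cX(t)\lesssim\cX_0\ll1$. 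The details you flag (the role of $\sigma_1\in(-\tfrac d2,\tfrac d2]$ in the product estimates, and the ordering of localization, time integration and $\sup_j$) are exactly the points the paper handles in \eqref{4.8}--\eqref{4.12}, so the argument is correct as proposed.
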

\begin{proof}
We recall that the Lyapunov type inequality \eqref{ELowL} holds as stated in Lemma \ref{LowL}. By integrating \eqref{ELowL} over $[0,t]$ and then taking the square root both sides of the resulting inequality yields
\begin{eqnarray}\label{4.8}
\begin{aligned}
&\|(\ddj a,\ddj u,\ddj\theta)\|_{L^\infty_t(L^2)} +2^{j} \|(\ddj a,\ddj\theta)\|_{L^2_t(L^2)} + \|\ddj u\|_{L^2_t(L^2)}\\[1mm]
&\lesssim \|(\ddj a_0,\ddj u_0,\ddj\theta_0)\|_{L^2} +\left\|\left(\ddj(au),\ddj(u\cdot\nabla u),\ddj(u\theta)\right)\right\|_{L^2_t(L^2)} \\[1mm]
&\quad+ \left\|\left(\ddj(\frac{\theta-a}{a+1}\nabla a),\ddj\big(\frac{a}{a+1}\nabla\theta\big)\right)\right\|_{L^2_t(L^2)} +\|\ddj\big(\nabla(\frac{a}{a+1})\nabla\theta\big)\|_{L^1_t(L^2)}
\end{aligned}
\end{eqnarray}
due to the low-frequency $j\leq0$. Multiplying \eqref{4.8} by $2^{-\sigma_{1}j}$ and taking the supremum on both $[0,t]$ and $j\leq0$, we get
\begin{eqnarray}\label{4.9}
\begin{aligned}
&\|(a,u,\theta)\|^{\ell}_{\tL_t^\infty(\dB^{-\sigma_{1}}_{2,\infty})} + \| (a,\theta)\|^{\ell}_{\tL_t^2(\dB^{-\sigma_{1}+1}_{2,\infty})} + \|u\|^{\ell}_{\tL_t^2(\dB^{-\sigma_{1}}_{2,\infty})} \\
&\quad\lesssim  \|(a_0,u_0,\theta_0)\|^{\ell}_{\dB^{-\sigma_{1}}_{2,\infty}}+\|(au, u\cdot\nabla u, u\theta) \|^{\ell}_{\tL_t^2(\dB^{-\sigma_{1}}_{2,\infty})} \\
&\quad\quad+\|(\frac{\theta-a}{a+1}\nabla a, \frac{a}{a+1}\nabla\theta)\|^{\ell}_{\tL_t^2(\dB^{-\sigma_{1}}_{2,\infty})}+ \|\nabla(\frac{a}{a+1})\nabla\theta\|^{\ell}_{\tL_t^1(\dB^{-\sigma_{1}}_{2,\infty})}.
\end{aligned}
\end{eqnarray}
In what follows, we focus on the nonlinear terms on the right-hand side of \eqref{4.9}. According to Proposition \ref{ParalinearizationTheorem} and Proposition \ref{ProductEstimates} with $s_{1}=\frac{d}{2}, s_{2}=-\sigma_{1}$, one can get
\begin{equation}\label{4.10}
\begin{aligned}
&\|(au, u\cdot\nabla u, u\theta) \|^{\ell}_{\tL_t^2(\dB^{-\sigma_{1}}_{2,\infty})}\\
&\lesssim \|(a,u,\theta)\|_{\tL_t^\infty(\dB^{-\sigma_{1}}_{2,\infty})}\|(u,\nabla u)\|_{\tL_t^2(\dB^{\frac{d}{2}}_{2,1})}  \\
%&\lesssim \|(a,u,\theta)\|_{\tL_t^\infty(\dB^{-\sigma_{1}}_{2,\infty}\cap\dB^{\frac{d}{2}+1}_{2,1})}\|(u,\theta)\|_{\tL_t^2(\dB^{\frac{d}{2},\frac{d}{2}+1}_{2,1})}  \\
&\lesssim\left(\cX_{L,\sigma_{1}}+\cX(t)\right)\cX(t),
\end{aligned}   
\end{equation}

\begin{equation}\label{***}
\begin{split}
&\|(\frac{\theta-a}{a+1}\nabla a, \frac{a}{a+1}\nabla\theta)\|^{\ell}_{\tL_t^2(\dB^{-\sigma_{1}}_{2,\infty})}\\
&\quad\lesssim\|(a,\theta)\|_{\tL_t^\infty(\dB^{\frac{d}{2}}_{2,1})}\| a\|_{\tL_t^2(\dB^{-\sigma_{1}+1}_{2,\infty})}+\|a\|_{\tL_t^\infty(\dB^{\frac{d}{2}}_{2,1})}\| \theta\|_{\tL_t^2(\dB^{-\sigma_{1}+1}_{2,\infty})}\\
&\quad\lesssim\left(\cX_{L,\sigma_{1}}+\cX(t)\right)\cX(t),
\end{split}
\end{equation}
%To handle the nonlinear term $\nabla(\frac{a}{a+1})\nabla\theta,$ we obtain from the continuity of composition functions in Proposition \ref{ParalinearizationTheorem} that
and
\begin{equation}\label{4.12}
\begin{aligned}
\|\nabla(\frac{a}{a+1})\nabla\theta\|^{\ell}_{\tL_t^1(\dB^{-\sigma_{1}}_{2,\infty})}&\lesssim \|\frac{a}{a+1}\|_{\tL_t^2(\dB^{\frac{d}{2}+1}_{2,1})}\|\theta\|_{\tL_t^2(\dB^{-\sigma_{1}+1}_{2,\infty})}\\
&\lesssim \|a\|_{\tL_t^2(\dB^{\frac{d}{2}+1}_{2,1})}\|\theta\|_{\tL_t^2(\dB^{-\sigma_{1}+1}_{2,\infty})}\\
&\lesssim \cX(t)\left(\cX_{L,\sigma_{1}}+\cX(t)\right).
\end{aligned}
\end{equation}
Substituting the estimates \eqref{4.10}-\eqref{4.12} into \eqref{4.9}, we have
\begin{eqnarray*}
\cX_{L,\sigma_{1}}\lesssim  \|(a_0,u_0,\theta_0)\|^{\ell}_{\dB^{-\sigma_{1}}_{2,\infty}} + \cX(t)\left(\cX_{L,\sigma_{1}}+\cX(t)\right).
\end{eqnarray*}
Making use of \eqref{IniCon}, $\cX(t)\lesssim\cX(0)\ll1$ and $\|(a_0,u_0,\theta_0)\|^{\ell}_{\dB^{-\sigma_{1}}_{2,\infty}}+\cX(0)\sim\delta_{0}$, we prove \eqref{3*}. The proof of Lemma \ref{LowEsta} is completed.
\end{proof}

Next, we introduce a new time-weighted energy functional
\begin{equation}\label{**1}
\begin{split}
\cX_{M}&\triangleq\|(1+\tau)^{M}(a,u,\theta)\|^{\ell}_{\tL^\infty_t(\dB^{\frac{d}{2}}_{2,1})} +\|(1+\tau)^{M}(a,\theta)\|^{\ell}_{\tL^2_t(\dB^{\frac{d}{2}+1}_{2,1})}\\
&+\|(1+\tau)^{M}u\|^{\ell}_{\tL^2_t(\dB^{\frac{d}{2}}_{2,1})}+\|(1+\tau)^{M}(a,u,\theta)\|^{h}_{\tL^\infty_t(\dB^{\frac{d}{2}+1}_{2,1})}\\
&\quad+\|(1+\tau)^{M}(a,u)\|^{h}_{\tL^2_t(\dB^{\frac{d}{2}+1}_{2,1})} + \|(1+\tau)^{M}\theta\|^{h}_{\tL^2_t(\dB^{\frac{d}{2}+2}_{2,1})},
\end{split}
\end{equation}
where $M>0$ is chosen sufficiently large. Consequently, we have the following time-weighted Lyapunov
estimate.
\begin{lem}\label{LowEsta-1}
Let $(a,u,\theta)$ be the global solution to the Cauchy problem \eqref{2.2} given by Theorem {\rm\ref{GlobalExist}}. Under
the assumption of Theorem {\rm\ref{decay-2}}, it holds that
\begin{equation}\label{**2}
\cX_{M}\lesssim\delta_{0}(1+t)^{M-\frac{1}{2}(\frac{d}{2}+\sigma_{1})}
\end{equation}
for $M>1+\frac{1}{2}(\frac{d}{2}+\sigma_{1})$ and $t>0$, where $\delta_{0}\triangleq\|(a_0,u_0,\theta_0)\|^{\ell}_{\dB^{-\sigma_{1}}_{2,\infty}}+\|(a_0,u_0,\theta_0)\|^{h}_{\dB^{\frac{d}{2}+1}_{2,1}}.$
\end{lem}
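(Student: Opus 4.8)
\textbf{Proof plan for Lemma \ref{LowEsta-1}.}

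The strategy is a time-weighted energy argument built on top of the Lyapunov-type inequalities established in Lemmas \ref{LowL} and \ref{HighEsta}, combined with the decay information of negative Besov norms provided by Lemma \ref{LowEsta}. First I would multiply the low-frequency Lyapunov inequality \eqref{ELowL} by $(1+t)^{2M}$, apply it at each dyadic block $j\leq 0$, and after moving the time derivative onto the weight, absorb the resulting $(1+t)^{2M-1}\|\ddj(a,u,\theta)\|_{L^2}^2$ term using the dissipation $2^{2j}\|(\ddj a,\ddj\theta)\|_{L^2}^2+\|\ddj u\|_{L^2}^2$ in the genuinely low-frequency regime, at the price of a residual term that we control by interpolation with the negative-regularity norm $\cX_{L,\sigma_1}$ from \eqref{3*}. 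The key interpolation inequality is the standard one: for $j\leq 0$,
\begin{equation*}
\|(1+\tau)^M \ddj f\|_{L^2}\ \text{is estimated via}\ \|\ddj f\|_{\dB^{-\sigma_1}_{2,\infty}}^{\theta}\,\big(\text{higher-regularity weighted norm}\big)^{1-\theta},
\end{equation*}
which transfers the slow-decaying "tail" of the spectrum into the already-bounded quantity $\delta_0$, leaving behind a self-improving bound of the form $\cX_M\lesssim \delta_0(1+t)^{M-\frac12(\frac d2+\sigma_1)}+\big(\text{small}\big)\,\cX_M+\big(\text{nonlinear}\big)$. The high-frequency part is easier: by Lemma \ref{HighEsta} the density and velocity are exponentially damped and $\theta$ is parabolic, so multiplying \eqref{high*} by $(1+t)^{2M}$ and integrating gives a gain of $(1+t)^{2M}$ against an exponential, producing the desired $(1+t)^{M-\frac12(\frac d2+\sigma_1)}$ bound once the initial high-frequency norm is absorbed into $\delta_0$; the borderline time-integrability is handled exactly because $M>1+\frac12(\frac d2+\sigma_1)$.

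Concretely, I would carry out the steps in this order. (1) Record the weighted low-frequency estimate: integrate $(1+\tau)^{2M}\,\eqref{ELowL}$ over $[0,t]$, use $\frac{d}{d\tau}\big((1+\tau)^{2M}E\big)=(1+\tau)^{2M}\frac{dE}{d\tau}+2M(1+\tau)^{2M-1}E$, and split the bad term $2M(1+\tau)^{2M-1}\|\ddj(a,u,\theta)\|_{L^2}^2$ over $j$: for $j$ with $2^{2j}\gtrsim$ a fixed threshold it is absorbed into the dissipation, while for the remaining finitely-many smallest $j$ it is bounded by $(1+\tau)^{2M-1}2^{-2\sigma_1 j}\cdot 2^{2\sigma_1 j}\|\ddj(a,u,\theta)\|_{L^2}^2$ and then by $\cX_{L,\sigma_1}^2$ after optimizing; summing the $\ell^1$ in $j$ over $j\leq 0$ is harmless since $-\sigma_1+$ the relevant shift is positive. (2) Do the analogous computation for the high-frequency inequality \eqref{high*}, where no such splitting is needed because the non-dissipated pieces carry their own exponential decay; here one uses $\int_0^t(1+\tau)^{2M}e^{-c(t-\tau)}\,d\tau\lesssim(1+t)^{2M}$. (3) Bound all the nonlinear terms appearing on the right-hand sides — products $au,\ u\cdot\nabla u,\ u\theta,\ \frac{\theta-a}{a+1}\nabla a,\ \frac{a}{a+1}\nabla\theta,\ \nabla(\frac a{a+1})\nabla\theta$ and the commutators $\tilde R^i_j$ — in weighted Besov norms, exactly as in \eqref{4.10}--\eqref{4.12} but now with the time weight $(1+\tau)^M$ distributed onto one factor; by the product and composition estimates (Propositions \ref{ParalinearizationTheorem}, \ref{ProductEstimates}) and the smallness $\cX(t)\lesssim\cX(0)\ll1$ from \eqref{IniCon}, each such term is bounded by $\cX(t)\big(\cX_M+\text{lower order}\big)$, hence absorbable into the left-hand side. (4) Collect: $\cX_M\lesssim \delta_0(1+t)^{M-\frac12(\frac d2+\sigma_1)}+\cX(0)\,\cX_M$, and conclude by absorption since $\cX(0)\ll1$.

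The main obstacle is Step (1) — the correct splitting of the low-frequency spectrum and the interpolation that feeds the slowly-decaying tail into $\cX_{L,\sigma_1}$. One must be careful that the residual term generated by the time-weight differentiation is genuinely controlled: the exponent $M-\frac12(\frac d2+\sigma_1)$ is sharp, so any loss in the interpolation power would break the estimate, and the condition $M>1+\frac12(\frac d2+\sigma_1)$ is precisely what guarantees that $\int_0^t(1+\tau)^{2M-1-(\frac d2+\sigma_1)}\,d\tau\lesssim (1+t)^{2M-(\frac d2+\sigma_1)}$ without a logarithmic loss. A secondary technical point is that the weighted norms must be handled at the level of $\tL^\rho_t$ spaces (taking the $\ell^1$-sum in $j$ after the time integration, per the Chemin--Lerner convention) so that the nonlinear estimates of Step (3), which are stated in that framework, apply verbatim; keeping the weight inside the time-norm but outside the $\ell^1(\Z)$ sum is what makes the bookkeeping go through.
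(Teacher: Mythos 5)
Your proposal is correct and follows the same overall architecture as the paper's proof: time-weighted versions of the low- and high-frequency Lyapunov inequalities \eqref{ELowL} and \eqref{high*}, control of the residual term produced by differentiating the weight via the negative-regularity bound of Lemma \ref{LowEsta}, product/composition/commutator estimates for the nonlinearities with the weight $(1+\tau)^M$ placed on a single factor so that each term is $\lesssim\cX(t)\,\cX_M(t)$, and absorption using $\cX(t)\ll1$. The one place where you genuinely diverge is the key step, the treatment of the residual term $(1+\tau)^{2M-1}\|\ddj(a,u,\theta)\|^2_{L^2}$ at low frequencies: you propose a time-dependent frequency splitting (absorb into the dissipation when $2^{2j}\gtrsim(1+\tau)^{-1}$, estimate the remainder through $\dB^{-\sigma_1}_{2,\infty}$ and geometric summation of $2^{(\frac d2+\sigma_1)j}$), whereas the paper first applies Young's inequality in time to reduce to an $\tL^1_t(\dB^{\frac d2}_{2,1})$ norm, then interpolates in regularity between $\dB^{-\sigma_1}_{2,\infty}$ and $\dB^{\frac d2+1}_{2,1}$ at each time with exponent $\eta_0=(\frac d2+\sigma_1+1)^{-1}$, and finally uses H\"older in time together with Young's inequality for products to absorb the fractional power of the weighted dissipation norm (see \eqref{0001*}--\eqref{0006*}). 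Both routes yield the sharp exponent $M-\frac12(\frac d2+\sigma_1)$; yours is the classical time-frequency splitting in the spirit of Xin--Xu, while the paper's avoids an explicit time-dependent threshold at the cost of the more delicate exponent bookkeeping in \eqref{0003*}. One inaccuracy to correct: for a fixed (or time-dependent) threshold, the set of $j\le0$ lying below it is infinite, not ``finitely many''; the sum over it converges only because of the geometric factor $2^{(\frac d2+\sigma_1)j}$ with $\frac d2+\sigma_1>0$, which you do invoke, so the argument survives but the phrasing should be fixed.
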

\begin{proof}
The proof is separated into several steps.
\begin{itemize}
    \item \textit{Step1: Low-frequency estimates}
\end{itemize}
Let us begin with the Lyapunov type inequality \eqref{ELowL} in the low-frequency regime. Multiplying \eqref{ELowL} by $(1+t)^{2M}$ and using the fact 
\begin{align*}
\begin{split}
&(1+t)^{2M}\frac{d}{dt}\|(\ddj a,\ddj u,\ddj\theta)\|^{2}_{L^{2}}\\
&\quad=\frac{d}{dt}\|(1+t)^{M}(\ddj a,\ddj u,\ddj\theta)\|^{2}_{L^{2}}-2M(1+t)^{2M-1}\|(\ddj a,\ddj u,\ddj\theta)\|^{2}_{L^{2}},     
\end{split}
\end{align*}
we obtain
\begin{eqnarray*}
\begin{aligned}
&\frac{d}{dt}\|(1+\tau)^{M}(\ddj a,\ddj u, \ddj\theta)\|^2_{L^2} +(1+t)^{2M}\left( 2^{2j}\|(\ddj a, \ddj\theta)\|^2_{L^2}+\|\ddj u\|^2_{L^2}\right)\\
&\lesssim(1+t)^{2M-1}\|(\ddj a,\ddj u,\ddj\theta)\|^{2}_{L^{2}} \\
&\quad+ (1+t)^{2M}\|\left(\ddj(au),\ddj(u\cdot\nabla u),\ddj(\frac{\theta-a}{a+1}\nabla a)\right)\|_{L^2}\|(\ddj u,\ddj \nabla a)\|_{L^2} \\
&\quad+(1+t)^{2M} \|\left(\ddj(u\theta),\ddj\big(\frac{a}{a+1}\nabla\theta\big)\right)\|_{L^2}\|\ddj\nabla\theta\|_{L^2}\\
&\quad+(1+t)^{2M}\|\ddj\big(\nabla(\frac{a}{a+1})\nabla\theta\big)\|_{L^2}\|\ddj\theta\|_{L^2}
\end{aligned}
\end{eqnarray*}
for $j\leq0$, which implies that
\begin{eqnarray}\label{4.6}
\begin{aligned}
&\|(1+t)^{M}(\ddj a,\ddj u, \ddj\theta)\|_{L^2} + 2^{j}\|(1+\tau)^{M}(\ddj a, \ddj\theta)\|_{L^2_{t}L^2}+\|(1+\tau)^{M}\ddj u\|_{L^2_{t}L^2}\\
&\quad\lesssim\|(\ddj a_0,\ddj u_0, \ddj\theta_0)\|_{L^2}+\|(1+\tau)^{M-\frac{1}{2}}(\ddj a,\ddj u,\ddj\theta)\|_{L^{2}_{t}L^{2}}\\
&\quad\quad+\left\|(1+\tau)^{M}\left(\ddj(au),\ddj(u\cdot\nabla u),\ddj(u\theta),\ddj(\frac{\theta-a}{a+1}\nabla a),\ddj\big(\frac{a}{a+1}\nabla\theta\big)\right)\right\|_{L^2_t(L^2)} \\[1mm]
&\quad\quad +\|(1+\tau)^{M}\ddj\big(\nabla(\frac{a}{a+1})\nabla\theta\big)\|_{L^1_t(L^2)}.
\end{aligned}
\end{eqnarray}
Then, we multiply \eqref{4.6} by $2^{\frac{d}{2}j}$, take the supremum on $[0,t]$, and then sum over $j\leq0$ to get
\begin{eqnarray}\label{4.009*}
\begin{aligned}
&\|(1+\tau)^{M}(a,u,\theta)\|^{\ell}_{\tL_t^\infty(\dB^{\frac{d}{2}}_{2,1})} + \|(1+\tau)^{M} (a,\theta)\|^{\ell}_{\tL_t^2(\dB^{\frac{d}{2}+1}_{2,1})} + \|(1+\tau)^{M}u\|^{\ell}_{\tL_t^2(\dB^{\frac{d}{2}}_{2,1})}\\
&\lesssim  \|(a_0,u_0,\theta_0)\|^{\ell}_{\dB^{\frac{d}{2}}_{2,1}}+\|(1+\tau)^{M-\frac{1}{2}}(a,u,\theta)\|^{\ell}_{\tL_t^2(\dB^{\frac{d}{2}}_{2,1})}\\
&\quad+\|(1+\tau)^{M}(au,u\cdot\nabla u, u\theta,\frac{\theta-a}{a+1}\nabla a,\frac{a}{a+1}\nabla\theta)\|^{\ell}_{\tL_t^2(\dB^{\frac{d}{2}}_{2,1})}\\
&\quad+ \|(1+\tau)^{M}\nabla(\frac{a}{a+1})\nabla\theta\|^{\ell}_{\tL_t^1(\dB^{\frac{d}{2}}_{2,1})} .
\end{aligned}
\end{eqnarray}
It is worth emphasized that the second term on the right-hand side of \eqref{4.009*} plays a key role in the
derivation of decay rates. To bound it, it follows from Lemma \ref{interpolation-1} and Young’s inequality that
\begin{equation}\label{0001*}
\begin{split}
&\|(1+\tau)^{M-\frac{1}{2}}(a,u,\theta)\|^{\ell}_{\tL_t^2(\dB^{\frac{d}{2}}_{2,1})}\\
&\quad\leq C\|(1+\tau)^{M-1}(a,u,\theta)\|^{\ell}_{\tL_t^1(\dB^{\frac{d}{2}}_{2,1})}+\frac{1}{4}\|(1+\tau)^{M}(a,u,\theta)\|^{\ell}_{\tL_t^\infty(\dB^{\frac{d}{2}}_{2,1})},
\end{split}
\end{equation}
where the first term can be handled as follows:
\begin{equation}\label{0002*}
\begin{split}
&\|(1+\tau)^{M-1}(a,u,\theta)\|^{\ell}_{\tL_t^1(\dB^{\frac{d}{2}}_{2,1})}\\
&\quad\lesssim\int_{0}^{t}(1+\tau)^{M-1}\|(a,u,\theta)^{\ell}\|_{\dB^{\frac{d}{2}}_{2,1}}d\tau+\int_{0}^{t}(1+\tau)^{M-1}\|(a,u,\theta)^{h}\|_{\dB^{\frac{d}{2}}_{2,1}}d\tau.
\end{split}
\end{equation}
To control the first term on the right-hand side of \eqref{0002*}, we deduce from Lemma \ref{interpolation-1} with $s_{1}=-\sigma_{1}, s_{2}=\frac{d}{2}, p=2$ and $\eta_{0}=\frac{1}{\sigma_{1}+1+\frac{d}{2}}$ that
\begin{equation}\label{0003*}
\begin{split}
&\int_{0}^{t}(1+\tau)^{M-1}\|(a,u,\theta)^{\ell}\|_{\dB^{\frac{d}{2}}_{2,1}}d\tau\\
&\quad\lesssim\int_{0}^{t}(1+\tau)^{M-1}\|(a,u,\theta)^{\ell}\|^{\eta_{0}}_{\dB^{-\sigma_{1}}_{2,\infty}}\|(a,u,\theta)^{\ell}\|^{1-\eta_{0}}_{\dB^{\frac{d}{2}+1}_{2,\infty}}d\tau\\
&\quad\lesssim\|(a,u,\theta)^{\ell}\|^{\eta_{0}}_{\tL^{\infty}_{t}(\dB^{-\sigma_{1}}_{2,\infty})}\int_{0}^{t}\|(1+\tau)^{M}(a,u,\theta)^{\ell}\|^{1-\eta_{0}}_{\dB^{\frac{d}{2}+1}_{2,1}}(1+\tau)^{M\eta_{0}-1}d\tau\\
&\quad\lesssim\|(a,u,\theta)^{\ell}\|^{\eta_{0}}_{\tL^{\infty}_{t}(\dB^{-\sigma_{1}}_{2,\infty})}\left(\|(1+\tau)^{M}(a,u,\theta)^{\ell}\|_{\tL^{2}_{t}(\dB^{\frac{d}{2}+1}_{2,1})}\right)^{1-\eta_{0}}\|(1+\tau)^{M\eta_{0}-1}\|_{L^{\frac{2}{1+\eta_{0}}}_{t}}.
\end{split}
\end{equation}
\iffalse
for the constant $\eta_{0}\in(0,1)$ given by 
\begin{equation}\label{0004*}
\frac{d}{2}=-\eta_{0}\sigma_{1}+(1-\eta_{0})(\frac{d}{2}+1).
\end{equation}
\fi
Taking advantage of \eqref{E.q6.7} and the dissipative properties of $(a,u,\theta)$ for high frequencies, it is easy to verify that
\begin{equation}\label{0005*}
\begin{split}
&\int_{0}^{t}(1+t)^{M-1}\|(a,u)^{h}\|_{\dB^{\frac{d}{2}}_{2,1}}d\tau\\
&\quad\lesssim\left(\|(a,u)\|^{h}_{\tL^{\infty}_{t}(\dB^{\frac{d}{2}}_{2,1})}\right)^{\eta_{0}}\int_{0}^{t}\left(\|(1+\tau)^{M}(a,u)\|^{h}_{\dB^{\frac{d}{2}}_{2,1}}\right)^{1-\eta_{0}}(1+\tau)^{M\eta_{0}-1}d\tau\\
&\quad\lesssim\left(\|(1+\tau)^{M}(a,u)\|^{h}_{\tL^{2}_{t}(\dB^{\frac{d}{2}+1}_{2,1})}\right)^{1-\eta_{0}}\left(\|(a,u)\|^{h}_{\tL^{\infty}_{t}(\dB^{\frac{d}{2}+1}_{2,1})}\right)^{\eta_{0}}\|(1+\tau)^{M\eta_{0}-1}\|_{L^{\frac{2}{1+\eta_{0}}}_{t}},
\end{split}
\end{equation}
and
\begin{equation}\label{0005*-1}
\begin{split}
&\int_{0}^{t}(1+t)^{M-1}\|\theta^{h}\|_{\dB^{\frac{d}{2}}_{2,1}}d\tau\\
&\quad\lesssim\left(\|\theta\|^{h}_{\tL^{\infty}_{t}(\dB^{\frac{d}{2}}_{2,1})}\right)^{\eta_{0}}\int_{0}^{t}\left(\|(1+\tau)^{M}\theta\|^{h}_{\dB^{\frac{d}{2}}_{2,1}}\right)^{1-\eta_{0}}(1+\tau)^{M\eta_{0}-1}d\tau\\
&\quad\lesssim\left(\|(1+\tau)^{M}\theta\|^{h}_{\tL^{2}_{t}(\dB^{\frac{d}{2}+2}_{2,1})}\right)^{1-\eta_{0}}\left(\|\theta\|^{h}_{\tL^{\infty}_{t}(\dB^{\frac{d}{2}+1}_{2,1})}\right)^{\eta_{0}}\|(1+\tau)^{M\eta_{0}-1}\|_{L^{\frac{2}{1+\eta_{0}}}_{t}}.
\end{split}
\end{equation}
By combining \eqref{0001*}-\eqref{0005*-1}, Young's inequality and the fact that $$\|(1+\tau)^{M\eta_{0}-1}\|_{L^{\frac{2}{1+\eta_{0}}}_{t}}\lesssim\left((1+t)^{M-\frac{1}{2}(\frac{d}{2}+\sigma_{1})}\right)^{\eta_{0}},$$ we deduce
\begin{equation}\label{0006*}
\begin{split}
&\|(1+\tau)^{M-\frac{1}{2}}(a,u,\theta)\|^{\ell}_{\tL_t^2(\dB^{\frac{d}{2}}_{2,1})}\\
&\quad\leq\left(\|(a,u,\theta)^{\ell}\|_{\tL^{\infty}_{t}(\dB^{-\sigma_{1}}_{2,\infty})}+\|(a,u,\theta)^{h}\|_{\tL^{\infty}_{t}(\dB^{\frac{d}{2}+1}_{2,1})}\right)(1+t)^{M-\frac{1}{2}(\frac{d}{2}+\sigma_{1})}\\
&\quad\quad+\frac{1}{4}\left(\|(1+\tau)^{M}(a,\theta)\|^{\ell}_{\tL^{2}_{t}(\dB^{\frac{d}{2}+1}_{2,1})}+\|(1+\tau)^{M}u\|^{\ell}_{\tL^{2}_{t}(\dB^{\frac{d}{2}}_{2,1})}\right)\\
&\quad\quad+\frac{1}{4}\left(\|(1+\tau)^{M}(a,u)\|^{h}_{\tL^{2}_{t}(\dB^{\frac{d}{2}+1}_{2,1})}+\|(1+\tau)^{M}\theta\|^{h}_{\tL^{2}_{t}(\dB^{\frac{d}{2}+2}_{2,1})}\right).
\end{split}
\end{equation}
According to Proposition \ref{ParalinearizationTheorem} and Proposition \ref{ProductEstimates}, the nonlinearities on the right-hand side of \eqref{4.009*} can be estimated by
\begin{equation}\label{0007*}
\begin{split}
&\|(1+\tau)^{M}(au,u\cdot\nabla u, u\theta,\frac{\theta-a}{a+1}\nabla a,\frac{a}{a+1}\nabla\theta)\|^{\ell}_{\tL_t^2(\dB^{\frac{d}{2}}_{2,1})}\\
&\quad\lesssim \|(a,u,\theta)\|_{\tL_t^\infty(\dB^{\frac{d}{2}}_{2,1})}\|(1+\tau)^{M}(u,\nabla u)\|_{\tL_t^2(\dB^{\frac{d}{2}}_{2,1})}\\
&\quad\quad+ \|(a,\theta)\|_{\tL_t^\infty(\dB^{\frac{d}{2}}_{2,1})}\|(1+\tau)^{M} a\|_{\tL_t^2(\dB^{\frac{d}{2}+1}_{2,1})}+\|a\|_{\tL_t^\infty(\dB^{\frac{d}{2}}_{2,1})}\|(1+\tau)^{M} \theta\|_{\tL_t^2(\dB^{\frac{d}{2}+1}_{2,1})}\\
&\quad\lesssim\cX(t)\cX_{M}(t)
\end{split}
\end{equation}
and
\begin{equation}\label{008*}
\begin{split}
\|(1+\tau)^{M}\nabla(\frac{a}{a+1})\nabla\theta\|^{\ell}_{\tL_t^1(\dB^{\frac{d}{2}}_{2,1})}&\lesssim \|a\|_{\tL_t^2(\dB^{\frac{d}{2}+1}_{2,1})}\|(1+\tau)^{M}\theta\|_{\tL_t^2(\dB^{\frac{d}{2}+1}_{2,1})}\\
&\lesssim \cX(t)\cX_{M}(t).
\end{split}
\end{equation}
Substituting \eqref{0006*}-\eqref{008*} into \eqref{4.009*}, we get the low-frequency bound
\begin{eqnarray}\label{0009*}
\begin{aligned}
&\|(1+\tau)^{M}(a,u,\theta)\|^{\ell}_{\tL_t^\infty(\dB^{\frac{d}{2}}_{2,1})} + \|(1+\tau)^{M} (a,\theta)\|^{\ell}_{\tL_t^2(\dB^{\frac{d}{2}+1}_{2,1})} + \|(1+\tau)^{M}u\|^{\ell}_{\tL_t^2(\dB^{\frac{d}{2}}_{2,1})}\\
&\quad\lesssim  \|(a_0,u_0,\theta_0)\|^{\ell}_{\dB^{\frac{d}{2}}_{2,1}}\\
&\quad\quad+\left(\|(a,u,\theta)^{\ell}\|_{\tL^{\infty}_{t}(\dB^{-\sigma_{1}}_{2,\infty})}+\|(a,u,\theta)\|^{h}_{\tL^{\infty}_{t}(\dB^{\frac{d}{2}+1}_{2,1})}\right)(1+t)^{M-\frac{1}{2}(\frac{d}{2}+\sigma_{1})}+\cX(t)\cX_{M}(t).
\end{aligned}
\end{eqnarray}
\begin{itemize}
    \item \textit{Step2: High-frequency estimates}
\end{itemize}
For any $j\geq-1$, we show after multiplying the Lyapunov type inequality \eqref{high*} by $(1+t)^{2M}$ that
\begin{equation}\label{00010*}
\begin{aligned}
&(1+t)^{M}\|(\ddj a,\ddj u, \ddj\theta)\|_{L^2} +\|(1+\tau)^{M}(\ddj a, \ddj u)\|_{L^2_{t}L^2}+2^{j}\|(1+\tau)^{M}\ddj \theta\|_{L^2_{t}L^2}\\
&\quad\lesssim\|(\ddj a_0,\ddj u_0, \ddj\theta_0)\|_{L^2}+\|(1+\tau)^{M-\frac{1}{2}}(\ddj a,\ddj u,\ddj\theta)\|_{L^{2}_{t}L^{2}}\\
&\quad\quad+\left(\int_{0}^{t}(1+\tau)^{2M}J_{1}\, d\tau\right)^{1/2}+\left(\int_{0}^{t}(1+\tau)^{2M}J_{2}\,d\tau\right)^{1/2}\\
&\quad\quad+\left(\int_{0}^{t}(1+\tau)^{2M}J_{3}\, d\tau\right)^{1/2}+2^{-j}\left(\int_{0}^{t}(1+\tau)^{2M}J_{4}\,d\tau\right)^{1/2},
\end{aligned}
\end{equation}
where
\begin{equation*}
\begin{split}
&J_{1}\triangleq\left\|\frac{\partial}{\partial t}\Big(\frac{1+\theta}{(1+a)^2}\Big)\right\|_{L^\infty}\|\ddj a\|_{L^2}^2+\left\|\nabla\Big(\frac{1+\theta}{1+a}\Big)\right\|_{L^\infty}\|\ddj u\|_{L^2}\|\ddj a\|_{L^2}\\
&\quad+\left\|\text{div}\Big(\frac{1+\theta}{(1+a)^2}u\Big)\right\|_{L^\infty}\|\ddj a\|^2_{L^2}+\left\|\text{div}u\right\|_{L^\infty}\|\ddj u\|^2_{L^2},\\
&J_{2}\triangleq\|\left(\ddj (u\theta),\|\frac{a}{1+a}\|_{L^\infty}\|\ddj \nabla\theta\|_{L^2},\|\nabla(\frac{a}{1+a})\|_{L^\infty}\|\ddj\theta\|_{L^2}\right)\|_{L^2}\|\ddj \nabla\theta\|_{L^2},\\
&J_{3}\triangleq\|\tilde{R}^1_j\|_{L^2}\|\frac{1+\theta}{(1+a)^2}\|_{L^{\infty}}\|\ddj a\|_{L^2}+\|\tilde{R}^2_j\|_{L^2}\|\ddj u\|_{L^2}+\|\tilde{R}^3_j\|_{L^2}\|\ddj \theta\|_{L^2},\\
&J_{4}\triangleq\|\ddj\text{div}(au)\|_{L^2}\|\ddj\text{div}u\|_{L^2}+\|\Big(\ddj(u\cdot\nabla u),\ddj\big(\frac{\theta-a}{a+1}\nabla a\big)\Big)\|_{L^2}\|\ddj\nabla a\|_{L^2}.
\end{split}
\end{equation*}
Thence performing direct computations on the above inequality, we have
\begin{equation}\label{4.20}
\begin{aligned}
& \|(1+\tau)^{M}(a, u,\theta)\|^h_{\tL^\infty_t(\dB^{\frac{d}{2}+1}_{2,1})} + \|(1+\tau)^{M}(a, u)\|^h_{\tL^2_t(\dB^{\frac{d}{2}+1}_{2,1})}+ \|(1+\tau)^{M}\theta\|^h_{\tL^2_t(\dB^{\frac{d}{2}+2}_{2,1})} \\
&\leq \|(a_0, u_0,\theta_0)\|^h_{\dB^{\frac{d}{2}+1}_{2,1}}+\|(1+\tau)^{M-\frac{1}{2}}(a,u,\theta)\|^{h}_{\tL_t^2(\dB^{\frac{d}{2}+1}_{2,1})}+\Sum_{i=i}^{4}I_{i},
\end{aligned}
\end{equation}
where
\begin{equation*}
\begin{split}
&I_{1}\triangleq\left\|\frac{\partial}{\partial t}\Big(\frac{1+\theta}{(1+a)^2}\Big)\right\|_{L^2_t(L^\infty)}\|(1+\tau)^{M} a\|^h_{\tL^2_t(\dB^{\frac{d}{2}+1}_{2,1})}\\
&\quad+\left\|\nabla\Big(\frac{1+\theta}{1+a}\Big)\right\|_{L^\infty_t(L^\infty)}\|(1+\tau)^{M} u\|_{\tL^2_t(\dB^{\frac{d}{2}+1}_{2,1})}\\
&\quad+\left\|\text{div}\left(\frac{1+\theta}{(1+a)^2}u\right)\right\|_{L^2_t(L^\infty)}\|(1+\tau)^{M}a\|^h_{\tL^2_t(\dB^{\frac{d}{2}+1}_{2,1})}+\|\text{div}u\|_{L^2_t(L^\infty)}\|(1+\tau)^{M}u\|^h_{\tL^2_t(\dB^{\frac{d}{2}+1}_{2,1})},\\
&I_{2}\triangleq\| (1+\tau)^{M}u\theta\|^h_{\tL^2_t(\dB^{\frac{d}{2}+1}_{2,1})}+ \|\nabla(\frac{a}{1+a})\|_{L^2_t(L^\infty)}\|(1+\tau)^{M}\nabla\theta\|^h_{\tL^2_t(\dB^{\frac{d}{2}+1}_{2,1})} \\
&\quad+\|\frac{a}{1+a}\|_{L^\infty_t(L^\infty)}\|(1+\tau)^{M}\nabla\theta\|^h_{\tL^2_t(\dB^{\frac{d}{2}+1}_{2,1})},\\
\end{split}
\end{equation*}
\begin{equation*}
\begin{split}
&I_{3}\triangleq\sum_{j\geq-1}2^{(\frac{d}{2}+1)j}\left( \|(1+\tau)^{M}\tilde{R}^1_j\|_{L^1_t(L^2)} + \|(1+\tau)^{M}\tilde{R}^2_j\|_{L^1_t(L^2)}+\|(1+\tau)^{M}\tilde{R}^3_j\|_{L^1_t(L^2)}\right),\\
&I_{4}\triangleq\|(1+\tau)^{M}\Big(\text{div}(au),(u\cdot\nabla u),\big(\frac{\theta-a}{a+1}\nabla a\big)\Big)\|^h_{\tL^2_t(\dB^{\frac{d}{2}}_{2,1})}.
\end{split}
\end{equation*}
By similar arguments as used in \eqref{0001*}-\eqref{0006*}, one can get
\begin{equation}\label{****}
\begin{split}
&\|(1+\tau)^{M-\frac{1}{2}}(a,u,\theta)\|^{h}_{\tL_t^2(\dB^{\frac{d}{2}+1}_{2,1})}\\
&\quad\leq C\int_{0}^{t}\|(1+\tau)^{M-1}(a,u,\theta)\|^{h}_{\dB^{\frac{d}{2}+1}_{2,1}}d\tau+\frac{1}{4}\|(1+\tau)^{M}(a,u,\theta)\|^{h}_{\tL_t^\infty(\dB^{\frac{d}{2}+1}_{2,1})}\\
&\quad\leq C\|(a,u,\theta)\|^{h}_{\tL_t^\infty(\dB^{\frac{d}{2}+1}_{2,1})}(1+t)^{M-\frac{1}{2}(\frac{3}{2}+\sigma_{1})}+\frac{1}{4}\|(1+\tau)^{M}(a,u,\theta)\|^{h}_{\tL_t^2(\dB^{\frac{d}{2}+1}_{2,1})}.
\end{split}
\end{equation}
The right-hand side of \eqref{4.20} can be controlled below. As in \eqref{2.2}, one can show
\begin{align*}
\big\|\frac{\partial}{\partial t}\big(\frac{1+\theta}{(1+a)^2}\big)\big\|_{L^\infty}\lesssim (1+\|(a,\theta)\|_{L^\infty})\|(\partial_t a,\partial_t \theta)\|_{L^\infty},
\end{align*}
where
\begin{align*}
\partial_{t}a&=- u\cdot\nabla a- (1+a)\text{div}u,\\
\partial_t\theta &=-(1+\theta)\text{div} u-u\cdot\nabla \theta +\frac{1}{a+1}\Delta \theta.
\end{align*}
Furthermore, straightforward computations lead to 
\begin{align*}
&\|\partial_t a\|_{L^\infty} \lesssim \|u\|_{L^\infty}\|\nabla a\|_{L^\infty} + \big(1+\|a\|_{L^\infty}\big)\|\text{div}u\|_{L^\infty},\\[1mm]
&\|\partial_t\theta \|_{L^\infty}\lesssim \big(1+\|\theta\|_{L^\infty}\big)\|\text{div} u\|_{L^\infty} + \|u\|_{L^\infty}\|\nabla \theta\|_{L^\infty} +\big(1+ \|a\|_{L^\infty}\big)\|\Delta \theta\|_{L^\infty}.
\end{align*}
With the above preparation, due to \eqref{bounded} we have the following estimates:
\begin{eqnarray}\label{4.21}
\left\{
\begin{aligned}
&\big\|\frac{\partial}{\partial t}\big(\frac{1+\theta}{(1+a)^2}\big)\big\|_{L^2_t(L^\infty)}\\
&\quad\lesssim  \|u\|_{L^\infty_t(L^\infty)}\|\nabla a\|_{L^2_t(L^\infty)} + \left(1+\|a\|_{L^\infty_t(L^\infty)}+\|\theta\|_{L^\infty_t(L^\infty)}\right)\|\text{div}u\|_{L^2_t(L^\infty)} \\
&\quad\quad+\|u\|_{L^\infty_t(L^\infty)}\|\nabla\theta\|_{L^2_t(L^\infty)}+ \left(1+\|a\|_{L^\infty_t(L^\infty)}\right)\|\Delta\theta\|_{L^2_t(L^\infty)},\\
&\Big\|\nabla\Big(\frac{1+\theta}{1+a}\Big)\Big\|_{L^\infty_t(L^\infty)}\lesssim\|(\nabla a,\nabla\theta)\|_{L^\infty_t(L^\infty)},\\
&\Big\|\text{div}\Big(\frac{1+\theta}{(1+a)^2}u\Big)\Big\|_{L^2_t(L^\infty)}\\
&\quad\lesssim \Big\|\nabla\Big(\frac{1+\theta}{(1+a)^2}\Big)\Big\|_{L^\infty_t(L^\infty)}\|u\|_{L^2_t(L^\infty)} + \Big\|\frac{1+\theta}{(1+a)^2}\Big\|_{L^\infty_t(L^\infty)}\|\text{div}u\|_{L^2_t(L^\infty)}\\[1mm]
&\quad\lesssim\|(\nabla a,\nabla\theta)\|_{L^\infty_t(L^\infty)}\|u\|_{L^2_t(L^\infty)} + \|\text{div}u\|_{L^2_t(L^\infty)}.
\end{aligned}
\right.
\end{eqnarray}
%By similar arguments as used in \eqref{0007*}-\eqref{008*}, note that the second and fourth terms on the right-hand of \eqref{4.20} can be further bounded by
Thence it follows by $\dB^{\frac{d}{2}}_{2,1}(\R^{d}_{x})\hookrightarrow L^{\infty}(\R^{d}_{x})$ and \eqref{E.q6.7} that
\begin{equation}\label{above}
\begin{split}
I_{1}&\lesssim\left(\cX(t)+\cX^{2}(t)\right)\cX_{M}(t).
\end{split}
\end{equation}
According to Proposition \ref{ParalinearizationTheorem} and Proposition \ref{ProductEstimates}, one can get
\begin{equation}\label{4}
\begin{split}
I_{2}&\lesssim\|u\|_{\tL^2_t(\dB^{\frac{d}{2}+1}_{2,1})}\|(1+\tau)^{M}\theta\|_{\tL^\infty_t(\dB^{\frac{d}{2}+1}_{2,1})}\\
&\quad+\big(\|a\|_{\tL^2_t(\dB^{\frac{d}{2}+1}_{2,1})}+\|a\|_{\tL^\infty_t(\dB^{\frac{d}{2}+1}_{2,1})}\big)\|(1+\tau)^{M}\theta\|_{\tL^2_t(\dB^{\frac{d}{2}+2}_{2,1})}\\
&\lesssim\cX(t)\cX_{M}(t).
\end{split}
\end{equation}
It follows from the commutator estimate in Proposition \ref{CommutatorEstimate} that
\begin{equation}\label{4.006}
\begin{aligned}
I_{3}&\lesssim \|(1+\tau)^{M}a\|_{\widetilde{L}^2_t(\dB^{\frac{d}{2}+1}_{2,1})}\|u\|_{\widetilde{L}^2_t(\dB^{\frac{d}{2}+1}_{2,1})}+\|(1+\tau)^{M}a\|_{\widetilde{L}^2_t(\dB^{\frac{d}{2}+1}_{2,1})}\|\theta\|_{\widetilde{L}^2_t(\dB^{\frac{d}{2}+2}_{2,1})},\\[1mm]
&\quad+ \|(1+\tau)^{M}u\|_{\widetilde{L}^2_t(\dB^{\frac{d}{2}+1}_{2,1})}\|u\|_{\widetilde{L}^2_t(\dB^{\frac{d}{2}+1}_{2,1})} + \big\|\frac{\theta-a}{1+a}\big\|_{\widetilde{L}^2_t(\dB^{\frac{d}{2}+1}_{2,1})}\|(1+\tau)^{M}a\|_{\widetilde{L}^2_t(\dB^{\frac{d}{2}+1}_{2,1})}\\[1mm]
&\lesssim\cX(t)\cX_{M}(t).\\[1mm]
\end{aligned}
\end{equation}
Due to Proposition \ref{ProductEstimates}, we get
\begin{equation}\label{10}
\begin{split}
I_{4}&\lesssim\|(a,u,\theta)\|_{\tL^\infty_t(\dB^{\frac{d}{2}+1}_{2,1})}\|(1+\tau)^{M}(a,u)\|_{\tL^2_t(\dB^{\frac{d}{2}+1}_{2,1})}\\
&\lesssim\cX(t)\cX_{M}(t).
\end{split}
\end{equation}
Hence, in view of \eqref{****}, \eqref{above}-\eqref{10}, it holds that
\begin{equation}\label{000101*}
\begin{split}
& \|(1+\tau)^{M}(a, u,\theta)\|^h_{\tL^\infty_t(\dB^{\frac{d}{2}+1}_{2,1})} + \|(1+\tau)^{M}(a, u)\|^h_{\tL^2_t(\dB^{\frac{d}{2}+1}_{2,1})}+ \|(1+\tau)^{M}\theta\|^h_{\tL^2_t(\dB^{\frac{d}{2}+2}_{2,1})} \\
&\lesssim \|(a_0, u_0,\theta_0)\|^h_{\dB^{\frac{d}{2}+1}_{2,1}}+\|(a,u,\theta)\|^{h}_{\tL_t^\infty(\dB^{\frac{d}{2}+1}_{2,1})}(1+t)^{M-\frac{1}{2}(\frac{3}{2}+\sigma_{1})}+\cX_{M}(t)(\cX(t)+\cX^{2}(t)).
\end{split}
\end{equation}
\begin{itemize}
    \item \textit{Step 3: The gain of time-weighted estimates}
\end{itemize}
By combining \eqref{0009*} and \eqref{000101*}, we conclude that
\begin{flalign*}
\begin{aligned}
\cX_{M}(t)&\lesssim  \|(a_0,u_0,\theta_0)\|^{\ell}_{\dB^{\frac{d}{2}}_{2,1}}+\|(a_0,u_0,\theta_0)\|^{h}_{\dB^{\frac{d}{2}+1}_{2,1}}\\
&\quad+\left(\|(a,u,\theta)^{\ell}\|_{\tL^{\infty}_{t}(\dB^{-\sigma_{1}}_{2,\infty})}+\|(a,u,\theta)\|^{h}_{\tL^{\infty}_{t}(\dB^{\frac{d}{2}+1}_{2,1})}\right)(1+t)^{M-\frac{1}{2}(\frac{d}{2}+\sigma_{1})}\\
&\quad+(\cX(t)+\cX^{2}(t))\cX_{M}(t).
\end{aligned}
\end{flalign*}
Together with the global existence result (Theorem \ref{GlobalExist}) that implies $\cX(t)\lesssim\varepsilon_{0}\ll 1$, Lemma \ref{LowEsta} and
$$\|(a,u,\theta)\|^{h}_{\tL^{\infty}_{t}(\dB^{\frac{d}{2}+1}_{2,1})}\lesssim\|(a_0,u_0,\theta_0)\|^{\ell}_{\dB^{\frac{d}{2}}_{2,1}}+\|(a_0,u_0,\theta_0)\|^{h}_{\dB^{\frac{d}{2}+1}_{2,1}}\lesssim\delta_0,$$
we end up with \eqref{**2}. Therefore, the proof of Lemma \ref{LowEsta-1} is finished.
\end{proof}
\textit{Proof of Theorem \ref{decay-2}:}
For any $M>1+\frac{1}{2}(\frac{d}{2}+\sigma_{1})>1$, we obtain by Lemma \ref{LowEsta-1}
\begin{equation}\label{00011*}
\cX_{M}\lesssim\delta_{0}(1+t)^{M-\frac{1}{2}(\frac{d}{2}+\sigma_{1})}
\end{equation}
for all $t>0$ and any suitably large $M$. Therefore, after dividing \eqref{00011*} by $(1+t)^{M}$, it is easy to get
\begin{equation}\label{00012*}
\begin{split}
\|(a,u,\theta)(t)\|_{\dB^{\frac{d}{2}}_{2,1}}&\lesssim\|(a,u,\theta)(t)\|^{\ell}_{\dB^{\frac{d}{2}}_{2,1}}+\|(a,u,\theta)(t)\|^{h}_{\dB^{\frac{d}{2}+1}_{2,1}}\\
&\lesssim\delta_{0}(1+t)^{-\frac{1}{2}(\frac{d}{2}+\sigma_{1})},\quad t\geq1.
\end{split}
\end{equation}
Then it follows from \eqref{3*}, \eqref{00012*}, and the interpolation inequality \eqref{Interpolation} that
\begin{equation}\label{00013*}
\begin{split}
\|(a,u,\theta)^{\ell}(t)\|_{\dB^{\sigma}_{2,1}}&\lesssim\|(a,u,\theta)^{\ell}(t)\|^{\frac{\frac{d}{2}-\sigma}{\frac{d}{2}+\sigma_{1}}}_{\dB^{-\sigma_{1}}_{2,\infty}}\|(a,u,\theta)^{\ell}(t)\|^{\frac{\sigma+\sigma_{1}}{\frac{d}{2}+\sigma_{1}}}_{\dB^{\frac{d}{2}}_{2,1}}\\
&\lesssim\delta_{0}(1+t)^{-\frac{1}{2}(\frac{d}{2}+\sigma_{1})},~~\sigma\in(-\sigma_{1},\frac{d}{2}).
\end{split}
\end{equation}
By \eqref{00012*}-\eqref{00013*}, the optimal time-decay estimates in \eqref{decay} hold.

In order to improve the decay for the damped component $u$. $\eqref{2.2}_{2}$ can be rewritten as
\begin{equation}\label{*001}
\partial_{t} u+  u=-\nabla a - \nabla\theta- u\cdot\nabla u - \frac{\theta-a}{a+1}\nabla a.   
\end{equation}
We take the low-frequency $\dB^{-\sigma_1}_{2,\infty}$-norm of \eqref{*001} and make use of \eqref{decay} to get
\begin{equation}\label{*002}
\begin{split}
\|u\|^{\ell}_{\dB^{-\sigma_1}_{2,\infty}}&\lesssim e^{-t}\|u_{0}\|^{\ell}_{\dB^{-\sigma_1}_{2,\infty}}+\int_{0}^{t}e^{-(t-\tau)}\big(\|(a,\theta)\|^{\ell}_{\dB^{-\sigma_1+1}_{2,\infty}}\\
&\quad+\|u\cdot\nabla u\|^{\ell}_{\dB^{-\sigma_1}_{2,\infty}}+\|\frac{\theta-a}{a+1}\nabla a\|^{\ell}_{\dB^{-\sigma_1}_{2,\infty}}\big)d\tau,\quad t>0.
\end{split}
\end{equation}
Owing to $-\sigma_1+1\leq\frac{d}{2}$ and \eqref{decay}, we get
\begin{equation}\label{*003}
\|(a,\theta)\|^{\ell}_{\dB^{-\sigma_1+1}_{2,\infty}}\lesssim\delta_{0}(1+t)^{-\frac{1}{2}}.
\end{equation}
It also holds by \eqref{decay}, \eqref{ProdEs2} with $s_{1}=\frac{d}{2}$, $s_{2}=-\sigma_{1}$, \eqref{IniCon} and Proposition \ref{ParalinearizationTheorem} that
\begin{equation}\label{*004}
\|u\cdot\nabla u\|^{\ell}_{\dB^{-\sigma_1}_{2,\infty}}\lesssim\|u\|_{\dB^{\frac{d}{2}}_{2,1}}\|u\|_{\dB^{-\sigma_1+1}_{2,\infty}}\lesssim\delta_{0}(1+t)^{-\frac{1}{2}},
\end{equation}
and
\begin{equation}\label{*005}
\|\frac{\theta-a}{a+1}\nabla a\|^{\ell}_{\dB^{-\sigma_1}_{2,\infty}}\lesssim\|(a,\theta)\|_{\dB^{\frac{d}{2}}_{2,1}}\|a\|_{\dB^{-\sigma_1+1}_{2,\infty}}\lesssim\delta_{0}(1+t)^{-\frac{1}{2}}.
\end{equation}
Inserting \eqref{*003}-\eqref{*005} into \eqref{*002} shows
$$\|u\|_{\dB^{-\sigma_1}_{2,\infty}}\lesssim\|u\|^{\ell}_{\dB^{-\sigma_1}_{2,\infty}}+\|u\|^{h}_{\dB^{\frac{d}{2}+1}_{2,1}}\lesssim e^{-t}+\int_{0}^{t}e^{-(t-\tau)}\delta_{0}(1+t)^{-\frac{1}{2}}d\tau\lesssim\delta_{0}(1+t)^{-\frac{1}{2}},$$
where one has used \eqref{00012*} and the estimate
\begin{flalign}
\begin{split}
&\int_{0}^{t}e^{-(t-\tau)}\delta_{0}(1+t)^{-\frac{1}{2}}d\tau\\
&\quad\lesssim e^{-\frac{1}{2}t}\int_{0}^{\frac{t}{2}}\delta_{0}(1+t)^{-\frac{1}{2}}d\tau+(1+\frac{1}{2}t)^{-\frac{1}{2}}\int_{\frac{t}{2}}^{t}e^{-(t-\tau)}d\tau\lesssim\delta_{0}(1+t)^{-\frac{1}{2}}.
\end{split}
\end{flalign}

Finally, we take the $\dB^{\sigma}_{2,1}$-norm of \eqref{*001} for any $\sigma\in(-\sigma_1,\frac{d}{2}-1]$ to have
\begin{equation}\label{decay-4}
\begin{split}
\|u\|^{\ell}_{\dB^{\sigma}_{2,1}}&\lesssim e^{-t}\|u_{0}\|^{\ell}_{\dB^{\sigma}_{2,1}}+\int_{0}^{t}e^{-(t-\tau)}\big(\|(a,\theta)\|^{\ell}_{\dB^{\sigma+1}_{2,1}}
+\|u\cdot\nabla u\|^{\ell}_{\dB^{\sigma}_{2,1}}+\|\frac{\theta-a}{a+1}\nabla a\|^{\ell}_{\dB^{\sigma}_{2,1}}\big)d\tau\\
&\lesssim\delta_{0}(1+t)^{-\frac{1}{2}(1+\sigma+\sigma_1)},
\end{split}
\end{equation}
where one has used $\|(a,u,\theta)(t)\|^{\ell}_{\dB^{\sigma+1}_{2,1}}\leq C\delta_{0}(1+t)^{-\frac{1}{2}(1+\sigma+\sigma_{1})}$ derived from \eqref{decay}. Therefore, the following decay rates holds:
\begin{equation}\label{decay-5}
\|u\|_{\dB^{\sigma}_{2,1}}\lesssim\|u\|^{\ell}_{\dB^{\sigma}_{2,1}}+\|u\|^{h}_{\dB^{\frac{d}{2}+1}_{2,1}}\lesssim\delta_{0}(1+t)^{-\frac{1}{2}(1+\sigma+\sigma_1)}.
\end{equation}
Hence, \eqref{decay-3} is followed by \eqref{decay-4}-\eqref{decay-5} directly. The proof of Theorem \ref{decay-2} is complete.

\section{Appendix }\setcounter{equation}{0}

\subsection{Littlewood-Paley decomposition and Besov space}
Let us briefly review the definition of Besov spaces based on the Littlewood-Paley decomposition. The interested reader is referred to Chapter 2 and Chapter 3 of \cite{Bahouri-2011} for more details. Firstly, let's introduce the homogeneous Littlewood-Paley decomposition. For that purpose, we fix some smooth radial non increasing function $\chi $ with $\mathrm{Supp}\,\chi \subset
B\left(0,\frac {4}{3}\right)$ and $\chi \equiv 1$ on $B\left(0,\frac{3}{4}\right)$, then set $\varphi (\xi) =\chi (\xi/2)-\chi (\xi)$ so that
$$\sum_{j\in \mathbb{Z}}\varphi (2^{-j}\cdot ) =1,\ \ \
%\hbox{in}\ \ \mathbb{R}^{d}\setminus \{0\} \ \ \hbox{and}\ \
\mathrm{Supp}\,\varphi \subset \left\{\xi \in \mathbb{R}^{d}:\frac{3}{4}\leq |\xi|\leq \frac{8}{3}\right\}.$$
For any $j\in\Z$, define the homogeneous dyadic blocks $\dot{\Delta}_{j}$ by
$$\dot{\Delta}_{j}f\triangleq \varphi (2^{-j}D)f=\mathcal{F}^{-1}(\varphi
(2^{-j}\cdot )\mathcal{F}f)=2^{jd}h(2^{j}\cdot )\ast f\ \ \hbox{with}\ \
h\triangleq \mathcal{F}^{-1}\varphi,$$
where $\mathcal{F}$ and $\mathcal{F}^{-1}$ are the Fourier transform and its inverse.
The following Littlewood-Paley decomposition of $f$:
\begin{equation} \label{Eq:2.1}
f=\sum_{j\in \mathbb{Z}}\dot{\Delta}_{j}f,
\end{equation}
holds true modulo polynomials for any tempered distribution $f$. In order to have equality in the sense of tempered distributions, we consider only elements of the set $\mathcal{S}^{\prime }(\mathbb{R}^{d})$ of tempered distributions $f$ such that
\begin{equation}\label{Eq:2.2}
\lim_{j\rightarrow -\infty }\| \dot{S}_{j}f\| _{L^{\infty} }=0,
\end{equation}
where $\dot{S}_{j}f$ stands for the low frequency cut-off defined by $\dot{S}_{j}f\triangleq\chi (2^{-j}D)f$. Indeed, if \eqref{Eq:2.2} is fulfilled, then \eqref{Eq:2.1} holds in $\mathcal{S}'(\mathbb{R}^{d})$. For convenience, we denote by $\mathcal{S}'_{h}(\mathbb{R}^{d})$ the subspace of tempered distributions satisfying \eqref{Eq:2.2}.

Based on those dyadic blocks, Besov spaces are defined as follows.
\begin{defn}\label{D5.1}
For $s\in \R$ and $1\leq p,r\leq \infty$, the homogeneous Besov spaces $\dot{B}^{s}_{p,r}$ is defined by
$$\dot{B}^{s}_{p,r}\triangleq\left\{f\in \mathcal{S}'_{h}:\|f\|_{\dot{B}^{s}_{p,r}}<+\infty\right\},$$
where
\begin{align}\label{DefinBesov}
\|f\|_{\dB_{p,r}^{s}} \triangleq\| \{2^{js}\|\ddj f\|_{L^p}\}_{j\in\Z}\|_{l^{r}(\Z)}.
\end{align}
\end{defn}
The mixed space-time Besov spaces are also used, which was introduced by J. Y. Chemin and N. Lerner \cite{Chemin-1995}.
\begin{defn}\label{D5.2}
For $T>0, \, s\in\mathbb{R}$, $1\leq r,\,\varrho\leq\infty$, the homogeneous Chemin-Lerner space $\widetilde{L}^{\varrho}_{T}(\dot{B}^{s}_{p,r})$
is defined by
$$\widetilde{L}^{\varrho}_{T}(\dot{B}^{s}_{p,r})\triangleq\left\{f\in L^{\theta}(0,T;\mathcal{S}'_{h}):\|f\|_{\widetilde{L}^{\varrho}_{T}(\dot{B}^{s}_{p,r})}<+\infty\right\},$$
where
\begin{equation}\label{DefinLBesov}
\|f\|_{\widetilde{L}^{\varrho}_{T}(\dot{B}^{s}_{p,r})}\triangleq\|\{2^{js}\|\dot{\Delta}_{j} f\|_{L^{\varrho}_{T}(L^{p})}\}_{j\in\Z}\|_{l^{r}(\mathbb{Z})}.
\end{equation}
\end{defn}

For notational simplicity, index $T$ will be omitted if $T=+\infty $. We also use the following functional space:
\begin{equation*}
\cC_{b}(\mathbb{R_{+}};\dot{B}_{p,r}^{s})\triangleq \left\{f \in
\mathcal{C}(\mathbb{R_{+}};\dot{B}_{p,r}^{s}) \ | \ \|f\| _{\widetilde{L}^{\infty}(\R_+; \dot{B}_{p,r}^{s})}<+\infty \right\} .
\end{equation*}
The above norm
\eqref{DefinLBesov} may be linked with those of the standard spaces $L_{T}^{\theta} (\dot{B}_{p,r}^{s})$ by means of Minkowski's
inequality.
\begin{rem}\label{Rem2.1}
It holds that
$$\|f\|_{\widetilde{L}^{\varrho}_{T}(\dot{B}^{s}_{p,r})}\leq\|f\|_{L^{\varrho}_{T}(\dot{B}^{s}_{p,r})}\ \
{\rm if} \ \ r\geq\varrho; \ \ \ \
\|f\|_{\widetilde{L}^{\varrho}_{T}(\dot{B}^{s}_{p,r})}\geq\|f\|_{L^{\varrho}_{T}(\dot{B}^{s}_{p,r})}\ \
{\rm if}\ \  r\leq\varrho.
$$
\end{rem}
Restricting the norms in \eqref{DefinBesov} and \eqref{DefinLBesov} to the low frequency part and high frequency part of distributions will be fundamental to our approach. For example \cite{LiShou2023}, we fix some integer $j_{0}$ (the value of which will follow from the proofs of our main results) and put\footnote{Note that for technical reasons, we need to a small overlap between low and high frequencies.}

\begin{equation*}%\label{Eq:2.5}
\|f\| _{\dot{B}_{p,r}^{s}}^{\ell} \triangleq \|\{2^{js}\|\ddj f\|_{L^{p}}\}_{j \leq j_0}\|_{l^r} \ \hbox{and} \
\|f\|_{\dot{B}_{p,r}^{s}}^{h}\triangleq \|\{2^{js}\| \ddj f\|_{L^{p}}\}_{j \geq j_0-1}\|_{l^r},
\end{equation*}
\begin{equation*} %\label{Eq:2.6}
\|f\|_{\widetilde{L}_{T}^{\varrho} (\dB_{p,r}^{s})}^{\ell} \triangleq \|\{2^{js}\|\ddj f\|_{L_{T}^{\varrho}(L^{p})}\}_{j\leq j_0}\|_{l^r}\ \hbox{and} \ \|f\| _{\widetilde{L}_{T}^{\varrho} (\dB_{p,r}^{s})}^{h}\triangleq \|\{2^{js}\|\ddj f\|_{L_{T}^{\varrho}(L^{p})}\}_{j \geq j_0-1}\|_{l^r}.
\end{equation*}
Define
\begin{align*}
f^{\ell}:=\sum_{j\leq-1}\ddj f, \ \ \ \  f^h:=f-f^{\ell} = \sum_{j\geq0}\ddj f.
\end{align*}
It is easy to check for any $s'>0$ that
\begin{align}\label{E.q6.7}
\begin{cases}
&\|f^{\ell}\|_{\dB_{p, r}^s} \lesssim\|f\|_{\dB_{p, r}^s}^{\ell} \lesssim\|f\|_{\dB_{p, r}^{s-s^{\prime}}}^{\ell},\\
&\|f^h\|_{\dB_{p, 1}^s} \lesssim\|f\|_{\dB_{p, r}^s}^h \lesssim\|f\|_{\dB_{p, r}^{s+s^{\prime}}}^h,\\ &\|f^{\ell}\|_{\widetilde{L}_T^{\varrho}(\dB_{p, r}^s)} \lesssim\|f\|_{\widetilde{L}_T^{\varrho}(\dot{B}_{p, r}^s)}^{\ell} \lesssim\|f\|_{\widetilde{L}_T^{\varrho}(\dB_{p, r}^{s-s^{\prime}})}^{\ell},\\ &\|f^h\|_{\widetilde{L}_T^{\varrho}(\dB_{p, r}^s)} \lesssim\|f\|_{\widetilde{L}_T^{\varrho}(\dB_{p, r}^s)}^h \lesssim\|f\|_{\widetilde{L}_T^{\varrho}(\dB_{p, r}^{s+s^{\prime}})}^h.
\end{cases}
\end{align}

We also need hybrid Besov spaces for which regularity assumptions are different in low frequencies and high frequencies \cite{Danchin2000}. We are going to recall the definition and properties.
\begin{defn}\label{def1}
Let $s,t\in\R.$ We define
\begin{align*}
\|f\|_{\dB^{s,t}_{2,1}} = \sum_{j\leq j_0}2^{js}\|\ddj f\|_{L^2} + \sum_{j >  j_0}2^{jt}\|\ddj f\|_{L^2}.
\end{align*}
Let $m = -[\frac{d}{2}+1-s],$ we then define
\begin{align*}
&\dB^{s,t}_{2,1}(\R^d) = \{f\in \mathcal{S'}(\R^d): \|f\|_{\dB^{s,t}_{2,1}} <\infty\}, \ \ \text{if}\ \  m<0,\\
&\dB^{s,t}_{2,1}(\R^d) = \{f\in \mathcal{S'}(\R^d)/\mathcal{P}_{m}: \|f\|_{\dB^{s,t}_{2,1}} <\infty\}, \ \ \text{if}\ \  m \ge 0.
\end{align*}
\end{defn}
\begin{rem} We have the following properties.
\begin{itemize}
\item $\dB^{s,s}_{2,1} = \dB^{s}_{2,1}.$
\item if $s\leq t$ then $\dB^{s,t}_{2,1} = \dB^{s}_{2,1} \cap \dB^{t}_{2,1}.$ Otherwise, $\dB^{s,t}_{2,1} = \dB^{s}_{2,1} + \dB^{t}_{2,1}.$
\item if $s_1\leq s_2$ and $t_1 \geq t_2,$ then $\dB^{s_1,t_1}_{2,1} \hookrightarrow \dB^{s_2,t_2}_{2,1}.$
\end{itemize}
\end{rem}

We recall some basic properties of Besov spaces and product estimates which will be used repeatedly in this paper. The first lemma is the so-called Bernstein inequalities.

\begin{lem}\label{Bernstein}
Let $k\in\N$, $1\leq a\leq b\leq\infty$, C is a constant and $f$ is an any function in $L^p$, then we have if $\mathrm{Supp}\, \mathcal{F}f\subset\left\{\xi\in \R^{d}: |\xi|\leq R\lambda\right\}$ for some $R>0$
\begin{equation*}
\|D^{k}f\|_{L^{b}}
\leq C^{1+k} \lambda^{k+d(\frac{1}{a}-\frac{1}{b})}\|f\|_{L^{a}}.
\end{equation*}
More generally, If \  $\mathrm{Supp}\, \mathcal{F}f\subset \left\{\xi\in \mathbb{R}^{d}: R_{1}\lambda\leq|\xi|\leq R_{2}\lambda\right\}$ for some $0<R_{1}<R_{2}$, we have
\begin{align*}
C^{-k-1}\lambda^k \|u\|_{L^a}\leq \|D^k u\|_{L^a}\leq C^{k+1} \lambda^{k} \|u\|_{L^a}
\end{align*}
\end{lem}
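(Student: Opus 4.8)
The plan is to reduce both inequalities to convolution estimates via Young's inequality, exploiting the fact that the spectral localization allows us to insert freely a smooth Fourier multiplier equal to $1$ on $\mathrm{Supp}\,\mathcal{F}f$. First I would treat the ball case. Fix $\phi\in C^\infty_c(\R^d)$ with $\phi\equiv 1$ on $B(0,R)$ and $\mathrm{Supp}\,\phi\subset B(0,2R)$. Since $\mathrm{Supp}\,\mathcal{F}f\subset B(0,R\lambda)$ we have $\mathcal{F}f=\phi(\cdot/\lambda)\mathcal{F}f$, whence for any multi-index $\alpha$ with $|\alpha|=k$,
\[
\partial^\alpha f=\mathcal{F}^{-1}\big[(i\xi)^\alpha\phi(\xi/\lambda)\big]\ast f.
\]
A change of variables gives $\mathcal{F}^{-1}[(i\xi)^\alpha\phi(\xi/\lambda)](x)=\lambda^{k+d}g_\alpha(\lambda x)$ with $g_\alpha\triangleq\mathcal{F}^{-1}[(i\xi)^\alpha\phi]$, so by Young's inequality with $1+\frac1b=\frac1c+\frac1a$,
\[
\|\partial^\alpha f\|_{L^b}\le\big\|\lambda^{k+d}g_\alpha(\lambda\cdot)\big\|_{L^c}\|f\|_{L^a}=\lambda^{k+d(\frac1a-\frac1b)}\|g_\alpha\|_{L^c}\|f\|_{L^a},
\]
where the power of $\lambda$ is correct because $1-\frac1c=\frac1a-\frac1b$. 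Summing over $|\alpha|=k$ and absorbing $\|g_\alpha\|_{L^c}$ and the number of multi-indices into the constant yields the first inequality.

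For the annular case the upper bound is identical, now choosing $\phi$ with $\phi\equiv1$ on $\{R_1\le|\xi|\le R_2\}$ and $\mathrm{Supp}\,\phi\subset\{R_1/2\le|\xi|\le 2R_2\}$, and setting $a=b$. The lower bound is the delicate direction: here I would recover $f$ from its $k$-th order derivatives. Using the multinomial identity $|\xi|^{2k}=\sum_{|\alpha|=k}\binom{k}{\alpha}\xi^{2\alpha}$, which is nonvanishing on the annulus, I set
\[
m_\alpha(\xi)\triangleq\binom{k}{\alpha}\frac{\overline{(i\xi)^\alpha}}{|\xi|^{2k}}\,\phi(\xi/\lambda),
\]
a smooth function since $|\xi|$ is bounded away from $0$ on $\mathrm{Supp}\,\phi(\cdot/\lambda)$, and it satisfies $\sum_{|\alpha|=k}m_\alpha(\xi)(i\xi)^\alpha=\phi(\xi/\lambda)$. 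Consequently $f=\sum_{|\alpha|=k}\mathcal{F}^{-1}[m_\alpha]\ast\partial^\alpha f$, and a rescaling gives $m_\alpha(\xi)=\lambda^{-k}\tilde m_\alpha(\xi/\lambda)$ with $\tilde m_\alpha\triangleq\binom{k}{\alpha}\overline{(i\,\cdot\,)^\alpha}|\cdot|^{-2k}\phi$ smooth and compactly supported. Young's inequality with the $L^1$ kernel then gives
\[
\|f\|_{L^a}\le\lambda^{-k}\sum_{|\alpha|=k}\|\mathcal{F}^{-1}\tilde m_\alpha\|_{L^1}\|\partial^\alpha f\|_{L^a},
\]
which rearranges to the stated lower bound $C^{-k-1}\lambda^k\|f\|_{L^a}\le\|D^k f\|_{L^a}$.

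The main obstacle is not the structure of the argument but the bookkeeping of the constants, namely that the dependence on $k$ is at worst exponential, i.e. of the form $C^{k+1}$. This amounts to controlling $\|g_\alpha\|_{L^c}$ and $\|\mathcal{F}^{-1}\tilde m_\alpha\|_{L^1}$ uniformly. I would estimate these norms of inverse Fourier transforms through Sobolev embedding, for instance $\|\mathcal{F}^{-1}G\|_{L^1}\lesssim\|(1+|\xi|^2)^{N}G\|_{L^2}$ for $N>d/4$ together with the rapid decay of $\mathcal{F}^{-1}G$, and then count how the polynomial prefactors $(i\xi)^\alpha$ and the multinomial weights $\binom{k}{\alpha}$ enlarge these norms under differentiation up to order $N$. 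Since each derivative falling on $(i\xi)^\alpha$ lowers its degree by one and $\sum_{|\alpha|=k}\binom{k}{\alpha}=d^{\,k}$, all contributions are dominated by a geometric factor $C^{k+1}$, closing the estimate. In the applications throughout the paper the order $k$ is fixed and small, so this exponential tracking is needed only to state the lemma in its sharp form.
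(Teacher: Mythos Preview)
The paper does not prove this lemma at all: it is stated in the appendix as a standard tool, with the reader referred to \cite{Bahouri-2011} for details. Your outline is precisely the classical proof one finds there---localize with a smooth cutoff that is $1$ on the spectral support, rewrite differentiation as convolution against a rescaled Schwartz kernel, and apply Young's inequality; for the reverse annular estimate, invert the symbol $|\xi|^{2k}$ via the multinomial expansion and convolve back. So your approach is correct and, since the paper offers no proof, there is nothing to compare against beyond noting that you have reproduced the textbook argument.

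One small remark on the constant tracking: your Sobolev-type bound $\|\mathcal{F}^{-1}G\|_{L^1}\lesssim\|(1+|\cdot|^2)^N G\|_{L^2}$ is fine, but to get the clean $C^{k+1}$ growth you should also track the number of multi-indices $|\alpha|=k$, which is $\binom{k+d-1}{d-1}\le (k+1)^{d-1}$, itself dominated by a geometric factor in $k$. As you correctly observe, for the purposes of this paper $k$ is always a fixed small integer (typically $k=1$ or $2$), so the sharp $k$-dependence is cosmetic.
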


Due to the Bernstein inequalities, the Besov spaces have many properties:
\begin{lem}\label{interpolation-1}
Let $1\leq p,r,r_{1},r_{2}\leq\infty$.
\begin{itemize}
\item  \emph{Completeness:} $\dot{B}^{s}_{p,r}$ is a Banach space whenever $s<\frac{d}{p}$ or $s\leq \frac{d}{p}$ and $r=1$.

\item  \emph{Embedding:} For any $s\in \R, 1\leq p_1 \leq p_2 \leq \infty,$ and $1\leq r_1 \leq r_2 \leq \infty,$ it holds that
$$\dB^s_{p_1, r_1}\hookrightarrow \dB^{s-d(\frac{1}{p_1}-\frac{1}{p_2})}_{p_2, r_2}.$$
Moreover, For any $1\leq p \leq q \leq \infty,$ we have the continuous embedding
$$\dB^0_{p,1} \hookrightarrow L^p \hookrightarrow \dB^0_{p,\infty} \hookrightarrow \dB^\sigma_{q,\infty} \ \text{for}\  \sigma = -d(\frac{1}{p}-\frac{1}{q})<0.$$

\item  \emph{Interpolation:} The following inequalities are satisfied for $1\leq p,\,r_{1},\,r_{2},\,r\leq \infty$, $s_{1}< s_{2}$ and $\theta \in (0,1)$:
$$\|f\|_{\dot{B}_{p,r}^{\theta s_{1}+(1-\theta)s_{2}}}\lesssim \|f\| _{\dot{B}_{p,r_{1}}^{s_{1}}}^{\theta} \|f\|_{\dot{B}_{p,r_2}^{s_{2}}}^{1-\theta} \ \ \ \hbox{with} \ \ \ \frac{1}{r}=\frac{\theta}{r_{1}}+\frac{1-\theta}{r_{2}}.$$
Particularly, we have the following optimal interpolation formula
\begin{align}\label{Interpolation}
\|f\|_{\dB_{p,1}^{\theta s_1+(1-\theta)s_2}}\leq \frac{C}{\theta(1-\theta)(s_2-s_1)} \|f\|_{\dB_{p,\infty}^{s_1}}^{\theta}\|f\|_{\dB_{p,\infty}^{s_2}}^{1-\theta}.
\end{align}
\end{itemize}
\end{lem}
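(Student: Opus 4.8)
The three assertions all rest on the Bernstein inequalities (Lemma~\ref{Bernstein}) combined with elementary manipulations of the weighted sequences $\{2^{js}\|\ddj f\|_{L^p}\}_{j\in\Z}$ in $\ell^r(\Z)$; the only genuinely delicate point is completeness, which is where the restriction $s<\frac dp$ (or $s=\frac dp,\ r=1$) is needed. The plan is to dispose of the embedding and interpolation statements first, since they are frequency-by-frequency estimates, and then treat completeness, which requires assembling the Littlewood--Paley series as an element of $\cS'_h$.

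For the embedding, I would start from the fact that $\ddj f$ has Fourier support in a dyadic annulus of scale $2^j$, hence in a ball of radius $\sim 2^j$, so the first inequality in Lemma~\ref{Bernstein} (with $k=0$) gives $\|\ddj f\|_{L^{p_2}}\lesssim 2^{jd(\frac1{p_1}-\frac1{p_2})}\|\ddj f\|_{L^{p_1}}$ when $p_1\le p_2$. Multiplying by $2^{j(s-d(\frac1{p_1}-\frac1{p_2}))}$ converts the right-hand weight into $2^{js}$, and since $r_1\le r_2$ the inclusion $\ell^{r_1}(\Z)\hookrightarrow\ell^{r_2}(\Z)$ closes the first claim. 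For the chain $\dB^0_{p,1}\hookrightarrow L^p\hookrightarrow\dB^0_{p,\infty}$, the left inclusion is the triangle inequality applied to $f=\sum_j\ddj f$, giving $\|f\|_{L^p}\le\sum_j\|\ddj f\|_{L^p}=\|f\|_{\dB^0_{p,1}}$; the right inclusion uses that $\ddj=\varphi(2^{-j}D)$ is convolution with an $L^1$-normalised kernel, hence uniformly bounded on $L^p$, so $\sup_j\|\ddj f\|_{L^p}\lesssim\|f\|_{L^p}$. The last link $\dB^0_{p,\infty}\hookrightarrow\dB^\sigma_{q,\infty}$ with $\sigma=-d(\frac1p-\frac1q)$ is the already-proved embedding taken with $r_1=r_2=\infty$.

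For interpolation, writing $s=\theta s_1+(1-\theta)s_2$ I would factor $2^{js}\|\ddj f\|_{L^p}=(2^{js_1}\|\ddj f\|_{L^p})^\theta(2^{js_2}\|\ddj f\|_{L^p})^{1-\theta}$ and apply H\"older's inequality on $\Z$ with conjugate exponents $\frac{r_1}{\theta r}$ and $\frac{r_2}{(1-\theta)r}$, whose reciprocals sum to $1$ precisely because $\frac1r=\frac\theta{r_1}+\frac{1-\theta}{r_2}$; this yields the first bound. For the refined estimate \eqref{Interpolation} I would instead use $\|\ddj f\|_{L^p}\le 2^{-js_1}\|f\|_{\dB^{s_1}_{p,\infty}}$ and $\|\ddj f\|_{L^p}\le 2^{-js_2}\|f\|_{\dB^{s_2}_{p,\infty}}$, split $\|f\|_{\dB^s_{p,1}}=\sum_j 2^{js}\|\ddj f\|_{L^p}$ at a cutoff $N$, bound the low range $j\le N$ by the $s_1$-estimate and the high range $j>N$ by the $s_2$-estimate, and sum the two resulting geometric series (with ratios $2^{-(1-\theta)(s_2-s_1)}$ and $2^{-\theta(s_2-s_1)}$). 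Choosing $2^{N}\sim(\|f\|_{\dB^{s_2}_{p,\infty}}/\|f\|_{\dB^{s_1}_{p,\infty}})^{1/(s_2-s_1)}$ balances the two contributions to $\|f\|_{\dB^{s_1}_{p,\infty}}^\theta\|f\|_{\dB^{s_2}_{p,\infty}}^{1-\theta}$, while the two geometric sums contribute factors $\frac{C}{(1-\theta)(s_2-s_1)}$ and $\frac{C}{\theta(s_2-s_1)}$ whose sum is exactly $\frac{C}{\theta(1-\theta)(s_2-s_1)}$.

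The main obstacle is completeness. Given a Cauchy sequence $(f_n)$ in $\dB^s_{p,r}$, the uniform bound $2^{js}\|\ddj f_n\|_{L^p}\lesssim 1$ together with Bernstein shows that for each fixed $j$ the sequence $(\ddj f_n)_n$ is Cauchy in $L^p$, converging to some $g_j\in L^p$ with Fourier support in the annulus of scale $2^j$. The task is to assemble $f:=\sum_j g_j$ as a genuine element of $\cS'_h$, to verify $\ddj f=g_j$, and to prove $f_n\to f$ in $\dB^s_{p,r}$. Here the hypothesis $s<\frac dp$ (or $s=\frac dp,\ r=1$) is decisive: by the first Bernstein inequality $\|g_j\|_{L^\infty}\lesssim 2^{j(\frac dp-s)}\,2^{js}\|g_j\|_{L^p}$, and for $j\le0$ the factor $2^{j(\frac dp-s)}$ is summable against the $\ell^r$-bounded sequence $2^{js}\|g_j\|_{L^p}$ (by H\"older when $r>1$, directly when $r=1$ and $s=\frac dp$), so $\sum_{j\le0}g_j$ converges in $L^\infty$ and a fortiori in $\cS'$; the part $\sum_{j>0}g_j$ converges in $\cS'$ by testing against Schwartz functions and exploiting the rapid decay of $\widehat\varphi$. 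This gives $f\in\cS'_h$ with $\lim_{j\to-\infty}\|\dot S_j f\|_{L^\infty}=0$, and $\ddj f=g_j$ follows from the quasi-orthogonality of the blocks, while $f_n\to f$ in norm is recovered by passing to the limit in $n$ inside the $\ell^r$-sum via Fatou's lemma. I expect the verification of $\cS'_h$-membership and the interchange of limits in the low-frequency regime to be the technically heaviest part.
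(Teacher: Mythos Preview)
Your proposal is correct and follows the standard textbook argument (essentially that of Bahouri--Chemin--Danchin \cite{Bahouri-2011}). Note, however, that the paper itself does not prove Lemma~\ref{interpolation-1}: it is stated in the appendix as a standard fact, with the reader referred to \cite{Bahouri-2011} for details, so there is no ``paper's own proof'' to compare against. Your sketch of all three parts---Bernstein for the embeddings, H\"older-in-$j$ plus the geometric-splitting trick for the two interpolation inequalities, and the low-frequency $L^\infty$ summability for completeness---is exactly the route taken in that reference.
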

System \eqref{2.2} also involves multivariate compositions of functions (through $\frac{\theta-a}{a+1}$, $\frac{1}{a+1}$ that are bounded thanks to the following result:
\begin{prop}\label{ParalinearizationTheorem}{\rm(\cite{Xu-2023})}
Let $m\in\N$ and $s>0$. Let $G$ be a function in $\cC^{\infty}(\R^{m}\times\R^{d})$ such that $G(0,\cdots,0)=0$. Then for every real valued functions $f_{1},\cdots,f_{m}\in \dB_{p,r}^s\cap L^\infty$, the function $G(f_{1},\cdots,f_{m})$ belons to $\dB_{p,r}^s\cap L^\infty$ and we have
\begin{equation*}
\|G(f_{1},\cdots,f_{m})\|_{\dB_{p,r}^{s}}\leq C \|(f_{1},\cdots,f_{m})\|_{\dB_{p,r}^{s}}
\end{equation*}
with $C$ depending only on $\|f_{i}\|_{L^\infty}~(i=1,\cdots,m),\ G_{f_{i}}'\ $ (and higher derivatives), $s,\ p\ $ and $d$.

In the case $s>-\min(\frac{d}{p},\frac{d}{p*})$, then $f_{1},\cdots,f_{m}\in \dB_{p,r}^s\cap \dB_{p,1}^\frac{d}{p}$ implies that $G(f_{1},\cdots,f_{m})\in\dB_{p,r}^s\cap \dB_{p,1}^\frac{d}{p}$ and we have
\begin{equation*}
\|G(f_{1},\cdots,f_{m})\|_{\dB_{p,r}^{s}}\leq C \left(1+\|f_{1}\|_{\dB_{p,1}^{\frac{d}{p}}}+\cdots+\|f_{m}\|_{\dB_{p,1}^{\frac{d}{p}}}\right)\|(f_{1},\cdots,f_{m})\|_{\dB_{p,r}^{s}}.
\end{equation*}
\end{prop}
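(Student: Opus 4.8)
The plan is to prove both inequalities by combining Bony's paraproduct machinery with a dyadic telescoping (finite-increment) argument, treating the multivariate composition exactly as in the scalar case of \cite{Bahouri-2011} with only cosmetic bookkeeping for the several arguments $f_1,\dots,f_m$. Write $f=(f_1,\dots,f_m)$ and $\dot S_j f=(\dot S_j f_1,\dots,\dot S_j f_m)$, and fix an even integer $N>s$.

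For the first estimate I would start from the telescoping identity $G(f)=\sum_{j\in\Z}\big(G(\dot S_{j+1}f)-G(\dot S_j f)\big)$, legitimate because $G(0,\dots,0)=0$ and $\dot S_j f\to 0$ as $j\to-\infty$, and apply the fundamental theorem of calculus in the direction $\dot S_{j+1}f-\dot S_j f=\ddj f$:
\begin{equation*}
G(f)=\sum_{j\in\Z}\sum_{i=1}^{m}\mu_{i,j}\,\ddj f_i,\qquad \mu_{i,j}\triangleq\int_0^1 \partial_{i}G\big(\dot S_j f+\tau\,\ddj f\big)\,d\tau .
\end{equation*}
Since all arguments of $\partial_i G$ stay in the ball of radius $\sup_i\|f_i\|_{L^\infty}$, the chain rule together with the Bernstein inequality (each spatial derivative falling on $\dot S_j f$ or $\ddj f$ costs a factor $2^{j}$) yields $\|\mu_{i,j}\|_{L^\infty}\lesssim C_0$ and $\|\nabla^N \mu_{i,j}\|_{L^\infty}\lesssim C_0\,2^{jN}$, where $C_0$ depends only on $\sup_i\|f_i\|_{L^\infty}$ and on the derivatives of $G$ up to order $N$. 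To estimate $\dot\Delta_q G(f)$ I split the $j$-sum at $j=q-N_0$. For the high-frequency part $j\ge q-N_0$ I use $\|\dot\Delta_q(\mu_{i,j}\ddj f_i)\|_{L^p}\lesssim C_0\|\ddj f_i\|_{L^p}$, so that after multiplication by $2^{qs}$ the sum is a discrete convolution of $(2^{js}\|\ddj f_i\|_{L^p})_j\in\ell^r$ with the kernel $2^{ns}\mathbf 1_{\{n\le N_0\}}$ (here $n=q-j$), which lies in $\ell^1$ \emph{precisely because} $s>0$; Young's inequality closes it. For the low-frequency part $j<q-N_0$ I invoke the smoothing bound $\|\dot\Delta_q g\|_{L^p}\lesssim 2^{-qN}\|\nabla^N g\|_{L^p}$, valid for any $g\in L^p$ since $\varphi(2^{-q}\cdot)=2^{-qN}|2^{-q}\cdot|^{N}\psi(2^{-q}\cdot)$ with $N$ even (only $L^p$-bounded multipliers appear), together with the Leibniz bound $\|\nabla^N(\mu_{i,j}\ddj f_i)\|_{L^p}\lesssim C_0\,2^{jN}\|\ddj f_i\|_{L^p}$; this gives $\|\dot\Delta_q(\mu_{i,j}\ddj f_i)\|_{L^p}\lesssim 2^{(j-q)N}\|\ddj f_i\|_{L^p}$, and the associated kernel $2^{n(s-N)}\mathbf 1_{\{n>N_0\}}$ is summable because $N>s$. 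Summing over $i$ and taking the $\ell^r(\Z)$-norm in $q$ gives $\|G(f)\|_{\dB^s_{p,r}}\lesssim C_0\,\|(f_1,\dots,f_m)\|_{\dB^s_{p,r}}$.

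For the second (critical) estimate, valid down to $s>-\min(\tfrac dp,\tfrac{d}{p^*})$, I would instead use Bony's first-order paralinearization. Setting $g_i\triangleq\partial_i G(f)$, decompose
\begin{equation*}
G(f)=\sum_{i=1}^m T_{g_i}f_i+\mathcal R(f),
\end{equation*}
where $T$ is the paraproduct and $\mathcal R(f)$ is the paralinearization remainder. Because $\dB^{\frac dp}_{p,1}\hookrightarrow L^\infty$, the first part of the proposition applied to the smooth functions $\partial_i G$ (whose value at the origin is merely a constant, which is what produces the harmless $1+\cdots$) yields $\|g_i\|_{L^\infty}+\|g_i\|_{\dB^{d/p}_{p,1}}\lesssim 1+\sum_k\|f_k\|_{\dB^{d/p}_{p,1}}$. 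The paraproduct continuity estimate then gives $\|T_{g_i}f_i\|_{\dB^s_{p,r}}\lesssim \|g_i\|_{L^\infty}\|f_i\|_{\dB^s_{p,r}}$, and it is exactly the mapping property $T_{g}:\dB^s_{p,r}\to\dB^s_{p,r}$ for $g\in L^\infty$ that dictates the admissible range of $s$; the remainder $\mathcal R(f)$ is controlled in a strictly better space using $g_i\in\dB^{d/p}_{p,1}$ and the product law of Proposition~\ref{ProductEstimates}. Collecting the terms yields the claimed inequality with the affine constant $1+\sum_i\|f_i\|_{\dB^{d/p}_{p,1}}$.

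The conceptual core — and the step I expect to demand the most care — is the low-frequency regime $j<q-N_0$ of the first estimate: one must verify the Leibniz/chain-rule bound $\|\nabla^N(\mu_{i,j}\ddj f_i)\|_{L^p}\lesssim C_0\,2^{jN}\|\ddj f_i\|_{L^p}$, which in the multivariate setting unfolds into a Fa\`a di Bruno sum of products of $G$-derivatives and of $\nabla^k\dot S_j f,\nabla^k\ddj f$, each factor calibrated by Bernstein, and then confirm that the exponents produce an $\ell^1$ convolution kernel (hence the sharp requirement $N>s$, while $s>0$ is forced by the high-frequency part). In the second estimate the delicate point is that $T_{g}:\dB^s_{p,r}\to\dB^s_{p,r}$ remains bounded only for $s>-\min(d/p,d/p^*)$, so the negative endpoint is genuinely sharp and no further room is available.
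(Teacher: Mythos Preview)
The paper does not prove Proposition~\ref{ParalinearizationTheorem}; it is quoted in the appendix as a known result from \cite{Xu-2023} (itself an adaptation of the standard composition estimates in \cite{Bahouri-2011}), so there is no in-paper proof to compare against.

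Your argument for the first inequality ($s>0$) is the standard Meyer telescoping proof and is correct as written: the split at $j=q-N_0$, the Fa\`a di Bruno/Bernstein control of $\nabla^N\mu_{i,j}$, and the two $\ell^1$ convolution kernels ($2^{ns}\mathbf 1_{\{n\le N_0\}}$ needing $s>0$, and $2^{n(s-N)}\mathbf 1_{\{n>N_0\}}$ needing $N>s$) are exactly what the textbook argument does.

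Your sketch for the second inequality contains one genuine error and one gap. The error: the mapping $T_g:\dB^s_{p,r}\to\dB^s_{p,r}$ for $g\in L^\infty$ is valid for \emph{every} $s\in\R$, so it cannot be the source of the restriction $s>-\min(d/p,d/{p^*})$; that constraint enters through the remainder, not the paraproduct. The gap: you dismiss the paralinearization remainder $\mathcal R(f)=G(f)-\sum_i T_{g_i}f_i$ as ``controlled in a strictly better space'', but for $s\le 0$ this is the entire content of the estimate and you give no mechanism for it. A cleaner and more transparent route avoids paralinearization altogether: write $G(y)=\sum_i y_i H_i(y)$ with $H_i(y)=\int_0^1\partial_iG(\tau y)\,d\tau$, apply the first part of the proposition at regularity $d/p>0$ to obtain $H_i(f)-H_i(0)\in\dB^{d/p}_{p,1}$ with norm $\lesssim\sum_k\|f_k\|_{\dB^{d/p}_{p,1}}$, and then invoke the product law $\dB^{d/p}_{p,1}\times\dB^s_{p,r}\to\dB^s_{p,r}$ (Proposition~\ref{ProductEstimates}), which is precisely what requires $s>-\min(d/p,d/{p^*})$. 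This makes both the origin of the range restriction and the affine constant $1+\sum_i\|f_i\|_{\dB^{d/p}_{p,1}}$ completely transparent.
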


The following product estimates in Besov spaces play a fundamental role in our analysis of the nonlinear terms.
\begin{prop}\label{ProductEstimates}
The following statements holds:
\begin{itemize}
    \item Let $s>0$ and $1\leq p,r \leq  \infty $. Then $\dB_{p,r}^{s}\cap L^\infty$ is an algebra and
\begin{align}\label{ProdEs0}
\|fg\|_{\dB_{p,r}^{s}}\lesssim \|f\|_{L^\infty}  \|g\|_{\dB_{p,r}^{s}} + \|g\|_{L^\infty}  \|f\|_{\dB_{p,r}^{s}}.
\end{align}
\end{itemize}
\begin{itemize}
    \item Let the real numbers $s_1,\ s_2$ and $p$ satisfy $2\leq p\leq \infty,\ s_1\leq \frac{d}{p}, \ s_2\leq \frac{d}{p}$ and $s_1+s_2>0$. Then we have
\begin{align}\label{ProdEs1}
\|fg\|_{\dB_{p,1}^{s_1+s_2-\frac{d}{p}}}\lesssim \|f\|_{\dB_{p,1}^{s_1}}\|g\|_{\dB_{p,1}^{s_2}}.
\end{align}
\end{itemize}
\begin{itemize}
    \item Assume that $s_1,\ s_2$ and $p$ satisfy $2\leq p\leq \infty,\ s_1\leq \frac{d}{p}, \ s_2< \frac{d}{p}$ and $s_1+s_2\geq0.$ Then it holds that
\begin{align}\label{ProdEs2}
\|fg\|_{\dB_{p,\infty}^{s_1+s_2-\frac{d}{p}}}\lesssim \|f\|_{\dB_{p,1}^{s_1}}\|g\|_{\dB_{p,\infty}^{s_2}}.
\end{align}
\end{itemize}
\end{prop}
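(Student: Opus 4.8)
I would establish all three estimates from Bony's homogeneous paraproduct decomposition (as in Chapter 2 of \cite{Bahouri-2011}): for $f,g\in\cS'_{h}(\R^{d})$ write $fg=\dot{T}_{f}g+\dot{T}_{g}f+\dot{R}(f,g)$, where $\dot{T}_{f}g:=\sum_{j}\dot{S}_{j-1}f\,\ddj g$ is the paraproduct and $\dot{R}(f,g):=\sum_{j}\ddj f\,\widetilde{\ddj}g$ with $\widetilde{\ddj}:=\dot{\Delta}_{j-1}+\ddj+\dot{\Delta}_{j+1}$ is the remainder. The plan is to bound these three pieces separately, using only the spectral localization of the dyadic blocks, Bernstein's inequality (Lemma \ref{Bernstein}) and the embeddings of Lemma \ref{interpolation-1}; the three hypotheses in the statement should emerge precisely as the conditions making the resulting $\ell^{r}(\Z)$-series converge. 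One preliminary point: under the stated regularity restrictions $f$, $g$, and $fg$ all lie in $\cS'_{h}(\R^{d})$, so the homogeneous decomposition is legitimate.

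For the paraproduct I would use that $\ddk(\dot{T}_{f}g)$ only involves indices $j$ with $|j-k|\le N_{0}$ for a fixed $N_{0}$, since $\dot{S}_{j-1}f$ is spectrally localized in a ball of radius $\sim 2^{j-2}$ and hence $\dot{S}_{j-1}f\,\ddj g$ has frequencies $\sim 2^{j}$; thus $\|\ddk(\dot{T}_{f}g)\|_{L^{p}}\lesssim\sum_{|j-k|\le N_{0}}\|\dot{S}_{j-1}f\|_{L^{\infty}}\|\ddj g\|_{L^{p}}$. For \eqref{ProdEs0} one bounds $\|\dot{S}_{j-1}f\|_{L^{\infty}}\le\|f\|_{L^{\infty}}$ and convolves a finitely-supported kernel against $\{2^{js}\|\ddj g\|_{L^{p}}\}\in\ell^{r}$. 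For \eqref{ProdEs1}--\eqref{ProdEs2} one instead uses $\|\dot{S}_{j-1}f\|_{L^{\infty}}\lesssim\sum_{k\le j-2}2^{kd/p}\|\ddk f\|_{L^{p}}$, which is $\lesssim 2^{j(\frac{d}{p}-s_{1})}\|f\|_{\dB^{s_{1}}_{p,1}}$ when $s_{1}<\frac{d}{p}$ and $\lesssim\|f\|_{\dB^{\frac{d}{p}}_{p,1}}$ when $s_{1}=\frac{d}{p}$ (here the index $1$ is essential for the geometric sum); since $j\sim k$ the powers of $2$ then telescope and produce the bound with the shift $s_{1}+s_{2}-\frac{d}{p}$. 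The symmetric term $\dot{T}_{g}f$ is handled the same way with $f$ and $g$ exchanged, and this is where $s_{2}\le\frac{d}{p}$ in \eqref{ProdEs1}, and the \emph{strict} $s_{2}<\frac{d}{p}$ in \eqref{ProdEs2} (needed to sum $\sum_{k\le j-2}2^{k(\frac{d}{p}-s_{2})}$ against $g\in\dB^{s_{2}}_{p,\infty}$), enter.

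For the remainder, $\ddk\dot{R}(f,g)=\sum_{j\ge k-N_{0}}\ddk(\ddj f\,\widetilde{\ddj}g)$ because $\ddj f\,\widetilde{\ddj}g$ has frequencies $\lesssim 2^{j+2}$. For \eqref{ProdEs0} one takes $\|\ddk(\ddj f\,\widetilde{\ddj}g)\|_{L^{p}}\lesssim\|f\|_{L^{\infty}}\|\widetilde{\ddj}g\|_{L^{p}}$ and sums $\sum_{j\ge k}2^{(k-j)s}(2^{js}\|\widetilde{\ddj}g\|_{L^{p}})$, convergent in $\ell^{r}$ exactly because $s>0$. For \eqref{ProdEs1}--\eqref{ProdEs2}, since $p\ge2$ I would combine Hölder's inequality $\|uv\|_{L^{p/2}}\le\|u\|_{L^{p}}\|v\|_{L^{p}}$ with Bernstein to get $\|\ddk(\ddj f\,\widetilde{\ddj}g)\|_{L^{p}}\lesssim 2^{kd/p}\|\ddj f\|_{L^{p}}\|\widetilde{\ddj}g\|_{L^{p}}$, so that $2^{k(s_{1}+s_{2}-d/p)}\|\ddk\dot{R}(f,g)\|_{L^{p}}\lesssim\sum_{j\ge k-N_{0}}2^{(k-j)(s_{1}+s_{2})}(2^{js_{1}}\|\ddj f\|_{L^{p}})(2^{js_{2}}\|\widetilde{\ddj}g\|_{L^{p}})$. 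When $s_{1}+s_{2}>0$ this is a convergent convolution with an $\ell^{1}$ target, giving \eqref{ProdEs1}; when only $s_{1}+s_{2}\ge0$ one obtains merely a uniform-in-$k$ bound, which forces the weaker $\ell^{\infty}$ target of \eqref{ProdEs2}. Collecting the three contributions in $\ell^{r}(\Z)$ gives the stated inequalities.

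The step I expect to need the most care is the bookkeeping at the borderline exponents: pinning down exactly when the summability index must equal $1$ (to close the paraproduct at $s_{i}=\frac{d}{p}$ via an $\ell^{1}$ geometric series), why $s_{2}<\frac{d}{p}$ must be strict in \eqref{ProdEs2} while $s_{2}\le\frac{d}{p}$ is enough in \eqref{ProdEs1}, and why $s_{1}+s_{2}=0$ is admissible only with the $\dB^{\,\cdot}_{p,\infty}$ target. The remaining work is the routine $\ell^{r}$-convolution estimates.
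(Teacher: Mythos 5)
Your proposal is correct. The paper states Proposition \ref{ProductEstimates} without proof, recalling it as a standard fact from the literature (cf.\ \cite{Bahouri-2011, Danchin2000}), and your Bony-decomposition argument is exactly the standard proof behind those references: the treatment of $\dot T_f g$, $\dot T_g f$ and $\dot R(f,g)$ is sound, and the borderline conditions you isolate --- the $\ell^1$ summability needed to absorb the endpoint $s_i=\frac{d}{p}$ in the paraproduct, the strict inequality $s_2<\frac{d}{p}$ forced by the geometric sum against $\dB^{s_2}_{p,\infty}$ in \eqref{ProdEs2}, and the dichotomy $s_1+s_2>0$ versus $s_1+s_2\geq 0$ for the remainder --- are precisely the right ones.
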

Finally, the following commutator estimates will be useful to control the nonlinearities in high frequencies.
\begin{prop}\label{CommutatorEstimate}
Let $1< p<\infty$, $1\leq \varrho\leq \infty$ and $s\in (-\frac{d}{p}-1,\frac{d}{p}]$. Then there exists a generic constant $C>0$ depending only on the dimension $d$ and the regular index $s$
\begin{equation}
\left\{\begin{array}{l}
\|[\ddj, f]g\|_{L^p}\leq C c_j 2^{-j(s+1)}\|f\|_{\dB_{p,1}^{\frac{d}{p}+1}}\|g\|_{\dB_{p,1}^{s}},\\
\|[\ddj, f]g\|_{L^\varrho_t(L^p)}\leq C c_j 2^{-j(s+1)}\|f\|_{\tL^{\varrho_1}_t(\dB_{p,1}^{\frac{d}{p}+1})}\|g\|_{\tL^{\varrho_2}_t(\dB_{p,1}^{s})},
\end{array}\right.
\end{equation}
with $\frac{1}{\varrho} = \frac{1}{\varrho_1} + \frac{1}{\varrho_2}$ and the commutator $[A,B]\triangleq AB-BA$.
\end{prop}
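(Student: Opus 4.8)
\textit{Proof plan.} I would prove the pointwise-in-space estimate (the first line) first and then upgrade it to the Chemin--Lerner estimate (the second line) by inserting time norms. The argument rests on two ingredients: Bony's paraproduct decomposition, which isolates the only interaction carrying genuine cancellation, and the first-order Taylor expansion of the convolution kernel of $\ddj$, which is precisely what upgrades the naive weight $2^{-js}$ to the gained weight $2^{-j(s+1)}$.

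First I would write $fg=T_fg+T_gf+R(f,g)$, with $T_fg=\sum_k\dot S_{k-1}f\,\ddk g$ and $R(f,g)=\sum_k\ddk f\,\widetilde{\ddk}g$, and decompose $f\,\ddj g$ in the same fashion, so that
$$[\ddj,f]g=\big(\ddj T_fg-T_f\ddj g\big)+\big(\ddj T_gf-T_{\ddj g}f\big)+\big(\ddj R(f,g)-R(f,\ddj g)\big).$$
By spectral localisation only finitely many indices survive in each group, and the first group reduces, up to harmless spectrally-localised lower-order terms, to the genuine commutator sum $\sum_{|k-j|\le N_0}[\ddj,\dot S_{k-1}f]\ddk g$. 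This is the heart of the matter: writing $\ddj u=2^{jd}h(2^j\cdot)\ast u$ with $h=\mathcal F^{-1}\varphi$ Schwartz, one has
$$[\ddj,\dot S_{k-1}f]\ddk g(x)=\int 2^{jd}h\big(2^j(x-y)\big)\big(\dot S_{k-1}f(y)-\dot S_{k-1}f(x)\big)\ddk g(y)\,dy,$$
and the mean value identity $\dot S_{k-1}f(y)-\dot S_{k-1}f(x)=(y-x)\cdot\int_0^1\nabla\dot S_{k-1}f\big(x+\tau(y-x)\big)\,d\tau$ together with $\||z|\,2^{jd}h(2^jz)\|_{L^1}=C2^{-j}$ gives, by Young's inequality, $\|[\ddj,\dot S_{k-1}f]\ddk g\|_{L^p}\lesssim 2^{-j}\|\nabla\dot S_{k-1}f\|_{L^\infty}\|\ddk g\|_{L^p}$. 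Bernstein's inequality (Lemma \ref{Bernstein}) and the embedding $\dB^{\frac{d}{p}}_{p,1}\hookrightarrow L^\infty$ (Lemma \ref{interpolation-1}) then yield $\|\nabla\dot S_{k-1}f\|_{L^\infty}\lesssim\|f\|_{\dB^{\frac{d}{p}+1}_{p,1}}$ uniformly in $k$, while $\|\ddk g\|_{L^p}\le c_k 2^{-ks}\|g\|_{\dB^s_{p,1}}$ with $(c_k)\in\ell^1$. Since $|k-j|\le N_0$ forces $2^{-j}2^{-ks}\le C2^{-j(s+1)}$, summing over the finitely many $k$ reproduces exactly $C c_j 2^{-j(s+1)}\|f\|_{\dB^{\frac{d}{p}+1}_{p,1}}\|g\|_{\dB^s_{p,1}}$ with a renormalised $(c_j)\in\ell^1$.

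For the two remaining groups, which carry no cancellation, I would invoke the continuity of the paraproduct and remainder operators encoded in Proposition \ref{ProductEstimates}. The symmetric paraproduct $\ddj T_gf-T_{\ddj g}f$ places the product in $\dB^{s+1}_{p,1}$ provided $s\le\frac{d}{p}$ (this is where the upper endpoint originates, since $g$ enters at regularity $s-\frac{d}{p}\le0$), whereas the remainder $\ddj R(f,g)-R(f,\ddj g)$ lands in $\dB^{\frac{d}{p}+1+s}_{p,1}$, which converges precisely when $\frac{d}{p}+1+s>0$, i.e. $s>-\frac{d}{p}-1$ (the lower endpoint). At the frequencies where the estimate is applied both groups are therefore no worse than $\dB^{s+1}_{p,1}$, hence absorbed into $Cc_j2^{-j(s+1)}\|f\|_{\dB^{\frac{d}{p}+1}_{p,1}}\|g\|_{\dB^s_{p,1}}$; this settles the first inequality. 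Finally, every building-block bound holds pointwise in $t$, so the Chemin--Lerner estimate follows by taking $L^p_x$ norms, applying H\"older in time with $\frac1\varrho=\frac1{\varrho_1}+\frac1{\varrho_2}$ (splitting $\|\nabla\dot S_{k-1}f\|_{L^\infty}$ into $L^{\varrho_1}_t$ and $\|\ddk g\|_{L^p}$ into $L^{\varrho_2}_t$) and recognising the resulting time norms as the $\tL^{\varrho_i}_t(\dB^{\cdot}_{p,1})$ norms; finiteness of $N_0$ preserves the $\ell^1$ summation.

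The main obstacle I anticipate is the honest bookkeeping of the two non-commutator groups at the endpoints of the range $s\in(-\frac{d}{p}-1,\frac{d}{p}]$: one must verify that the symmetric paraproduct and the remainder, which individually do not enjoy the commutator cancellation, nonetheless contribute with the improved weight $2^{-j(s+1)}$ and a summable sequence, and that the borderline cases $s=\frac{d}{p}$ and $s\to-\frac{d}{p}-1$ are treated with the sharp product estimates rather than a lossy $L^\infty\times L^p$ bound.
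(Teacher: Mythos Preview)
The paper does not supply a proof of this proposition: it is stated in the appendix as a standard tool, with no argument given. Your proposal is the classical proof via Bony's paraproduct decomposition together with the first-order Taylor expansion of the convolution kernel of $\ddj$, exactly as in, e.g., Bahouri--Chemin--Danchin (Lemma~2.100 and its variants). The sketch is correct in its essentials: the commutator piece $\sum_{|k-j|\le N_0}[\ddj,\dot S_{k-1}f]\ddk g$ is where the $2^{-j}$ gain comes from, and the paraproduct/remainder pieces are handled by the product estimates you cite, with the constraints $s\le\frac{d}{p}$ and $s>-\frac{d}{p}-1$ arising precisely where you indicate. The passage to the Chemin--Lerner norms by H\"older in time is also the standard step. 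Since there is no proof in the paper to compare against, there is nothing further to add beyond noting that your plan reproduces the textbook argument.
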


\end{document}